\theoremstyle{theorem}
\newtheorem{thm}{Theorem}[section]
\newtheorem{lem}[thm]{Lemma}
\newtheorem{cor}[thm]{Corollary}
\newtheorem{prop}[thm]{Proposition}
\theoremstyle{remark}
\theoremstyle{definition}
\newtheorem{ex}[thm]{Example}
\newtheorem{df}[thm]{Definition}
\numberwithin{equation}{section}
\newcommand{\ind}{\mathbbm{1}}
\newcommand{\EE}{\mathbb{E}}
\newcommand{\PP}{\mathbb{P}}
\newcommand{\RR}{\mathbb{R}}
\newcommand{\ud}{\mathrm{d}}
\newcommand{\8}{\infty}
\def\P{\mathbb{P}}
\def\R{\mathbb{R}}
\def\bF{\bar F}
\renewcommand{\a}{\alpha}
\newcommand{\eps}{\varepsilon}
\begin{document}

\title{ \ Iterated random functions and regularly varying tails}
	\author[E. Damek, P. Dyszewski]{Ewa Damek, Piotr Dyszewski}

	\address{Instytut Matematyczny, Uniwersytet Wroclawski, Plac Grunwaldzki 2/4, 50-384 Wroclaw, Poland}
	\email{Ewa.Damek@math.uni.wroc.pl, pdysz@math.uni.wroc.pl}
	\thanks{The first author was partially supported by the NCN Grant UMO-2014/15/B/ST1/00060. The second author was partially supported by the National Science Centre, Poland (Sonata Bis, grant number DEC-2014/14/E/ST1/00588)}

	\keywords{Iterated random functions, Random difference equation, convolution equivalent distribution}
	\subjclass[2010]{60H25, 60J10}


\keywords{stochastic recursions, random difference equation, stationary distribution}
\subjclass[2010]{60H25, 60J10}

\maketitle

\begin{abstract}
	We consider solutions to so-called stochastic fixed point equation $R \stackrel{d}{=} \Psi(R)$, where $\Psi $ is a random Lipschitz function and $R$ is a random variable independent of $\Psi$. Under the assumption that $\Psi$ can be approximated by the function $x \mapsto Ax+B$ we show that the tail of $R$ is comparable with the one of $A$, provided
	that the distribution of $\log  (A\vee 1) $  is tail  equivalent.  
	In particular we obtain new results for the random difference equation.
\end{abstract}

\section{Introduction}

Let $\{ \Psi_{n} \}_{n \geq 1}$ be a sequence of independent identically distributed (iid) random Lipschitz functions. We consider the Markov chain defined by 
\begin{equation*}
	R_{n+1} = \Psi_{n+1}(R_n), \quad n\geq 0, 
\end{equation*}
where $R_0 $ is a random variable independent of $\{ \Psi_{n} \}_{n \geq 1}$.  Under rather mild moment assumptions, the Markov chain $\{R_n\}_{n \geq 0}$ possesses a unique stationary distribution.
Suppose that $R$ is distributed according to it and let $\Psi$ be a generic copy of  $\Psi_n $ independent of $R$, then necessarily
\begin{equation*}
	R \stackrel{d}{=} \Psi(R), \quad R \mbox{ independent of }\Psi,
\end{equation*}
where $\stackrel{d}{=}$ denotes the equality in distribution. Distributional equations of this form appear in wide range of problems in applied probability. Beginning from the early nineties  iterated function systems of i.i.d. Lipschitz maps (IFS) on a complete metric space have attracted a lot of attention: Alsmeyer \cite{alsmeyer2016}, Arnold and Crauel \cite{arnoldcrauel1992}, Brofferio and Buraczewski \cite{broferio2015}, Buraczewski and Damek \cite{buraczdamek2016},
Diaconis and Friedman \cite{diaconisfreedman1999}, Duflo \cite{duflo1997}, Elton \cite{elton1990multiplicative}, Henion and Herv\'e \cite{hennionherve2004}, Mirek \cite{mirek2011} and they still do. In particular, it seems that modelling them after random difference equation (described below) has been very fruitful, see Alsmeyer \cite{alsmeyer2016} and Mirek \cite{mirek2011}.

 \medskip

The main example we wish to present here, is the so-called random difference equation occurring whenever $\Psi$ is just 
affine transformation, i. e. $\Psi(x) = Ax+B$. Then, the recursive formula for the Markov chain in question which, in this special instance, will be denoted by $\{X_n \}_{n \geq 0}$, takes the simple form
\begin{equation*}
	X_{n+1} =A_{n+1}X_n +B_{n+1}, \quad n \geq 0,
\end{equation*}
where $\{ (A_n,B_{n}) \}_{n \geq 1 }$ is a sequence of iid two-dimensional random vectors. Here the stochastic equation satisfied 
by $X$ is  
\begin{equation*}
	X \stackrel{d}{=} AX+B, \quad X \mbox{ independent of }(A,B),
\end{equation*}
where $(A,B)$ is an independent copy of $(A_n, B_n)$. It turns out, that due to the explicit expression of the function $\Psi(x) = Ax+B$, the stationary solution can be explicitly represented  by
\begin{equation*}
	X \stackrel{d}{=} \sum_{k=0}^{\infty} B_{k+1}\prod_{j=1}^kA_j
\end{equation*}
provided that the series is convergent. The series above can be interpreted as the current value of future payments 
represented by $B_{k+1}$ with discount factors represented by $A_j$ and therefore, it  
is very often called a perpetuity. Random variables of this form appear also in context of Additive Increase Multiplicative 
Decrease algorithms~\cite{guillemin2004aimd} or  COGARCH processes~\cite{kluppelberg2006continuous}, to name a few. For more detailed discussion on perpetuities and related processes we refer the reader to recent monographs~\cite{buraczewski2016stochastic, iksanov2016renewal}. \medskip

From the point of view of applications the key information about the distribution of $X$ is its tail asymptotic, that is 
$$
	\PP[X>t] \quad \mbox{as} \quad t \to \infty.
$$ 
We wish to recall several scenarios, were $X$ exhibits regularly varying tail. Assume that $A, B \geq 0$ for the moment. The first example  is related to the work of Kesten~\cite{kesten1973random} 
and Goldie~\cite{goldie1991implicit}, which shows that if 
\begin{equation}\label{eq:kesten}
	\EE[A^{\alpha}]=1, \quad \EE[B^{\alpha}]<\infty, 
\end{equation}
for a positive $\alpha$, the stationary distribution has power tail, i. e.
\begin{equation}\label{KGtail}
	\PP[X>t] \sim c_X t^{-\alpha}\quad \mbox{as} \quad t \to \infty,
\end{equation}
for some implicitly given constant $c_X>0$. Here, and in what follows, for two functions $g$, $h$,  by $g(t) \sim h(t)$ we mean $\lim_{t \to \infty}\frac{g(t)}{h(t)} =1$. The asymptotic  \eqref{KGtail} follows from the behaviour of $A$, more precisely it is a direct consequence of the first assumption in~\eqref{eq:kesten}. It may as well happen that a heavy tail of $X$ is caused  by a heavy tail of $B$.
More precisely, the work of Grincevi{\'c}ius~\cite{grincevicius1975one} which was later improved by Grey~\cite{grey1994regular} treats the case 
\begin{equation}\label{eq:grey}
	\PP[B>t] \sim t^{-\alpha}L(t), \quad \EE[A^{\alpha}]<1,
\end{equation}
where $\alpha>0$ and $L(t)$ is a slowly varying function, that is $L(ct) \sim L(t)$ for any fixed $c>0$. Then
$$
	\PP[X>t] \sim \frac{1}{1-\EE[A^{\alpha}]} \PP[B>t].
$$
Finally, in the recent work Kevei~\cite{kevei2016note} proves that, if
\begin{equation}\label{eq:kevei}
	\PP[A>t]\sim t^{-\alpha}L(t), \quad \EE[A^{\alpha}]<1, \quad \EE[B^{\alpha+\varepsilon}] <\infty, \footnote{ Plus some additional technical assumptions}
\end{equation}
for $\alpha, \varepsilon>0$, then
$$
	\PP[X>t] \sim \frac{\EE[X^{\alpha}]}{1-\EE[A^{\alpha}]} \PP[A>t].
$$
One important feature the scenarios~\eqref{eq:kesten}-\eqref{eq:kevei} have in common is that either $A$ or $B$ contributes significantly to the asymptotic of $X$, not both.\medskip

In order to get a more detailed information about the structure of the distribution of $X$ it is natural to consider the frontiers of the scenarios in question, where both coefficients have influence on the tail asymptotic of $X$. First one, situated between~\eqref{eq:kesten} and~\eqref{eq:grey} was recently investigated by Damek and Ko\l{}odziejek~\cite{damek2017stochastic}, is the case when
$$
	\EE[A^{\alpha}]=1, \quad \PP[B>t]\sim t^{-\alpha}L(t), \quad \EE[B^{\alpha}] = \infty
$$
which results in
$$
	\PP[X>t] \sim t^{-\alpha} \widetilde{L}(t),
$$
with some explicitly given slowly varying function $\widetilde{L}$. 

The second one, being the frontier between~\eqref{eq:grey}-\eqref{eq:kevei}, is
\begin{equation}\label{eq:new}
	\EE[A^{\alpha}]<1, \quad \PP[A>t] \asymp \PP[B>t] \asymp t^{-\alpha}L(t),
\end{equation}
where $g(t) \asymp h(t)$ means that
\begin{equation*}
	0< \liminf_{t \to \infty}\frac{g(t)}{h(t)} \leq \limsup_{t \to \infty}\frac{g(t)}{h(t)} <\infty. 
\end{equation*}
Up to our best knowledge, this case was not studied in the literature apart form two specific cases: independent $A$ and $B$ treated in~\cite{li2015interplay} and 
so-called exponential functional of L{\'e}vy processes studied in~\cite{rivero2012tail}. Our aim is to present a robust approach to treat the scenario~\eqref{eq:new} and its counterpart for the iterated random functions $\{R_n\}_{n \geq 0}$ and $R$. \medskip 

We will work under the assumption that $\Psi$ can be well approximated by the affine transformation, that is
\begin{equation*}
	\left|\Psi(x) - Ax\right| \leq B \phi(x),
\end{equation*}
with 
\begin{equation*}
	\PP[B>t] = O(\PP[A>t]), \quad \phi \ \mbox{ is locally bounded } \quad  \mbox{and} \quad  \phi(x) = o(x), \ \mbox{as}\ x\to \8
\end{equation*}
and, among some technical assumptions, that for $\alpha>0$
$$
	\PP[A>t] \sim t^{-\alpha}L(t).
$$
In order to be able to successfully treat the case $\PP[A>t] \asymp \PP[B>t]$ we will need to ensure that the successive iterations of $\{\Psi_n \}_{n \geq 1}$ are well-behaved, i.e. for fixed $x$, 
$$
	\PP[\Psi(x)>t]\sim f_+(x) \PP[A>t]
$$
for some measurable function $f_+$. Under the above, the main result of this article states that
\begin{equation*}
	\PP[R>t] \sim \frac{\EE[f_+(R)]}{1-\EE[A^{\alpha}]} \PP[A>t],\quad \mbox{as}\ t \to \infty ,
\end{equation*}
with $\EE[f_+(R)]<\infty$. \medskip 

The article is organized as follows. In Section~\ref{sec:eqtails} we recall some basic notions related to the class of convolution equivalent distributions. 
The results in the case of Random Difference Equation are stated in Section \ref{sec:rde} and in the case of Iterated Random Functions in Section~\ref{sec:ifs}. The proofs are presented in Section~\ref{sec:proofs}. The Appendix contains proofs of some 
classical properties of the convolution equivalent distributions.

\section{Convolution equivalent tails}\label{sec:eqtails}
Throughout the paper we would like to benefit from properties of convolution equivalent distributions. We begin by introducing some basic notation. We will consider distribution, say $F$, with right-unbounded support. Write $F^{*n}$ for 
$n$th-convolution of $F$ and $\overline{F}$ for its tail, that is $\overline{F}(x) = 1-F(x)$.  

\begin{df}\label{df:sa}
	A distribution $F$ with right-unbounded support contained in $ [0, + \infty)$ is said to be tail equivalent if for any fixed $y \in \RR$, as $t \to \infty$
	\begin{equation}\label{eq:Lalpha}
		\overline{F}(t-y) \sim \overline{F}(t)e^{\alpha y},
	\end{equation}
	$m_{\alpha}(F)=\int e^{\alpha s} F(ds)<\infty$ and moreover
	\begin{equation}\label{eq:conveq}
		\overline{F^{*2}}(t) \sim 2m_{\alpha}(F) \overline{F}(t)
	\end{equation}
	for some $\alpha \geq 0$. In that case we will write $F \in \mathcal{S}(\alpha)$. By slight abuse of notation we will write 
	for random variable $X$, $X \in \mathcal{S}(\alpha)$ whenever its distribution is a member of $\mathcal{S}(\alpha)$.
\end{df}
This class was introduced independently by Chistyakov~\cite{chistyakov1964theorem}  and 
Chover et al.~\cite{chover1973functions, chover1973degeneracy}. 
The key feature of the class of convolution equivalent distributions is that only the right tail behaviour is of significance. For this reason it is natural to work with a wider class of distributions supported on the whole real line $\RR$.
\begin{df}\label{df:saR}
	We will say that a distribution $F$ with right-unbounded support contained in $\RR $ is tail equivalent if $F$ satisfies~\eqref{eq:Lalpha} and~\eqref{eq:conveq}. If this is the case, we will write $F \in \mathcal{S}_{\RR}(\alpha)$.
\end{df} 

It is not difficult to see that $X\in  \mathcal{S}_{\RR}(\alpha)$ if, and only if $X$ conditioned on the set $\{ X\geq 0\}$ is in $\mathcal{S}(\alpha )$. Equivalently, in term of the distributions 
$$
	F \in \mathcal{S}_{\RR}(\alpha) \quad \Leftrightarrow \quad F^+(t) = \frac{F(t) - \overline{F}(0)}{\overline{F}(0)} \ind_{[0,\infty)}(t) \in \mathcal{S}(\RR).
$$
Next property of distributions form the class $\mathcal{S}_{\RR}(\alpha)$ will be particularly important for us. 

\begin{lem}\label{lem:convsa}
	Assume that $F \in \mathcal{S}_{\R}(\alpha)$. If for some distributions $G_1$, $G_2$, $\overline{G}_i(t) \sim k_i\overline{F}(t)$,
	then
	\begin{equation}\label{convol}
		\lim_{t \to \infty} \frac{\overline{G_1*G_2}(t)}{\overline{F}(t)} = k_1 m_{\alpha}(G_2) + k_2 m_{\alpha}(G_1),
	\end{equation} 
	where $m_{\alpha}(G_i)=\int e^{\alpha s} G_i(ds)$. Moreover, $k_i>0$ implies $G_i \in \mathcal{S}_{\R}(\alpha)$. 
	If on the other hand
	$$
		\limsup_{t \to \infty}\frac{\overline{G}_i(t) }{\overline{F}(t)} \leq k_i,
	$$
	then 
	\begin{equation}\label{convol1}
	\limsup_{t \to \infty} \frac{\overline{G_1*G_2}(t)}{\overline{F}(t)}\leq  k_1 m_{\alpha}(G_2) + k_2 m_{\alpha}(G_1)
	\end{equation}
	and there is a function $\eta (t)\geq 0$, $\lim _{t\to \8}\eta (t)=0$ such that for every $t$
	\begin{equation}\label{convol2}
		\overline{G_1*G_2}(t)\leq  (k_1 m_{\alpha}(G_2) + k_2 m_{\alpha}(G_1)+(k_1k_2+m_{\alpha}(G_1)+m_{\alpha}(G_2))\eta (t)) \overline{F}(t).
	\end{equation}
	Similarly, if 
	$$
		\liminf_{t \to \infty}\frac{\overline{G}_i(t) }{\overline{F}(t)} \geq  k_i,
	$$
	then 
	\begin{equation}\label{convol3}
	\liminf_{t \to \infty} \frac{\overline{G_1*G_2}(t)}{\overline{F}(t)}\geq  k_1 m_{\alpha}(G_2) + k_2 m_{\alpha}(G_1)
	\end{equation}
	and for every $t$
	\begin{equation}\label{convol4}
		\overline{G_1*G_2}(t)\geq  (k_1 m_{\alpha}(G_2) + k_2 m_{\alpha}(G_1)-(k_1k_2+m_{\alpha}(G_1)+m_{\alpha}(G_2))\eta (t)) \overline{F}(t).
	\end{equation}

\end{lem}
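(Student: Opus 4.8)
The plan is to reduce everything to the single-tail estimate for convolution equivalent distributions. Recall that membership in $\mathcal{S}_{\R}(\alpha)$ is governed by the right tail only, so after passing to the conditioned distributions $F^+, G_i^+$ supported on $[0,\infty)$ we may as well assume all distributions live on $[0,\infty)$; the factors $\overline{F}(0)$, $\overline{G_i}(0)$ only rescale constants and disappear in the ratios we care about. The workhorse is the classical decomposition of the convolution tail: for independent $Y_1 \sim G_1$, $Y_2 \sim G_2$ and a truncation level $M>0$,
\begin{equation*}
	\overline{G_1*G_2}(t) = \int_{(-\infty,M]} \overline{G_1}(t-s)\,G_2(ds) + \int_{(-\infty,M]} \overline{G_2}(t-s)\,G_1(ds) + \overline{G_1}\!\cdot\!\overline{G_2}\text{-type remainder} + \PP[Y_1>M,\,Y_2>M,\,Y_1+Y_2>t].
\end{equation*}
More precisely one writes $\{Y_1+Y_2>t\}$ as the disjoint union of $\{Y_2\le M, Y_1+Y_2>t\}$, $\{Y_1\le M, Y_2>M, Y_1+Y_2>t\}$ and $\{Y_1>M, Y_2>M, Y_1+Y_2>t\}$, and controls the last ``both large'' piece using the fact that $F\in\mathcal{S}_{\R}(\alpha)$, which forces $\overline{F^{*2}}(t)/\overline{F}(t)\to 2m_\alpha(F)<\infty$ and hence, by a standard argument (Lemma type estimates in the appendix), $\sup_{0\le s\le t}\overline{F}(t-s)\overline{F}(s)/\overline{F}(t)\to 0$. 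Since $\overline{G}_i \le (k_i+\eps)\overline{F}$ eventually, the ``both large'' contribution is $\le C(k_1,k_2)\eta_1(t)\overline{F}(t)$ with $\eta_1(t)\to 0$; this is where the term $k_1k_2\,\eta(t)$ in \eqref{convol2} comes from.

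Next I would handle the two main integrals. Fix $M$. On the region $s\le M$ the local uniform convergence \eqref{eq:Lalpha}, i.e. $\overline{F}(t-s)\sim e^{\alpha s}\overline{F}(t)$ uniformly for $s$ in compact sets, together with $\overline{G}_1(t-s)\sim k_1\overline{F}(t-s)$ (uniform in such $s$, by dominated convergence once we know $\overline{F}(t-s)/\overline{F}(t)$ is bounded on $s\le M$), gives
\begin{equation*}
	\int_{(-\infty,M]} \overline{G_1}(t-s)\,G_2(ds) \sim k_1\overline{F}(t)\int_{(-\infty,M]} e^{\alpha s}\,G_2(ds),
\end{equation*}
and the right-hand integral is at most $m_\alpha(G_2)$, converging up to it as $M\to\infty$. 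The tail part $s>M$ of this integral is $\le (k_1+\eps)\int_{(M,\infty)}\overline{F}(t-s)G_2(ds)$, which after the substitution is $\approx (k_1+\eps)\overline{F}(t)\int_{(M,\infty)}e^{\alpha s}G_2(ds)$, a quantity made small by choosing $M$ large since $m_\alpha(G_2)<\infty$; that is the source of the $m_\alpha(G_1)+m_\alpha(G_2)$ coefficients multiplying $\eta(t)$. Symmetrically for the other integral with $G_1,G_2$ swapped. Assembling the three pieces and letting $t\to\infty$ then $M\to\infty$ yields \eqref{convol1} (upper bound) under the $\limsup$ hypothesis, and the non-asymptotic version \eqref{convol2} follows by bookkeeping all the error terms into a single $\eta(t)\to 0$. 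Under the two-sided hypothesis $\overline{G}_i\sim k_i\overline{F}$ the same computation gives the matching lower bound, hence the exact limit \eqref{convol}. The $\liminf$ statements \eqref{convol3}, \eqref{convol4} come from keeping only the two main integrals and discarding the (nonnegative) ``both large'' term, using Fatou in place of dominated convergence.

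Finally, the claim that $k_i>0$ implies $G_i\in\mathcal{S}_{\R}(\alpha)$ is obtained by specializing: \eqref{eq:Lalpha} for $G_i$ is immediate from $\overline{G}_i\sim k_i\overline{F}$ and \eqref{eq:Lalpha} for $F$, that $m_\alpha(G_i)<\infty$ follows because $\overline{G}_i$ is comparable to $\overline{F}$ which is $\alpha$-exponentially integrable, and taking $G_1=G_2=G_i$ in \eqref{convol} gives $\overline{G_i^{*2}}(t)\sim 2m_\alpha(G_i)\overline{F}(t)\sim 2m_\alpha(G_i)\overline{G}_i(t)/k_i\cdot k_i = 2m_\alpha(G_i)\overline{G}_i(t)$, which is exactly \eqref{eq:conveq}. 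The main obstacle is the first paragraph: rigorously showing that the ``both components large'' event contributes negligibly, uniformly enough to extract the explicit $\eta(t)$ bounds \eqref{convol2} and \eqref{convol4}. This rests on the smoothing/insensitivity property of $\mathcal{S}_{\R}(\alpha)$ — namely that $\overline{F}(t-s)/\overline{F}(t)$ stays bounded on $0\le s\le t/2$ and $\sup_{s\le t/2}\overline{F}(s)\overline{F}(t-s)/\overline{F}(t)\to 0$ — which I would quote from the appendix results on convolution equivalent distributions rather than reprove here.
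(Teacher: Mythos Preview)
Your overall strategy matches the paper's --- decompose the convolution tail, handle the ``one variable small'' integrals by dominated convergence using $\overline F(t-s)/\overline F(t)\to e^{\alpha s}$, and show the remainder is negligible --- but your justification of the remainder has a real gap. The claim $\sup_{0\le s\le t}\overline F(t-s)\overline F(s)/\overline F(t)\to 0$ is false: at any fixed $s$ the ratio tends to $e^{\alpha s}\overline F(s)>0$. Consequently, for \emph{fixed} $M$ the piece $\PP[Y_1>M,\,Y_2>M,\,Y_1+Y_2>t]$ is not $o(\overline F(t))$; with $G_1=G_2=F$ one gets $\limsup_{t}(\cdot)/\overline F(t)=2\int_M^\infty e^{\alpha s}\,dF(s)>0$. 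The companion step $\int_{(M,\infty)}\overline F(t-s)\,G_2(ds)\approx \overline F(t)\int_{(M,\infty)}e^{\alpha s}\,G_2(ds)$ fails for the same reason: the asymptotic $\overline F(t-s)/\overline F(t)\sim e^{\alpha s}$ is only locally uniform in $s$, not on all of $(M,t]$.

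The paper repairs this by truncating at both $v$ and $t-v$, writing
\[
\frac{\overline{G_1*G_2}(t)}{\overline F(t)}
=\int_{-\infty}^v\frac{\overline G_1(t-s)}{\overline F(t)}\,dG_2(s)
+\int_{-\infty}^v\frac{\overline G_2(t-s)}{\overline F(t)}\,dG_1(s)
+\int_v^{t-v}\frac{\overline G_1(t-s)}{\overline F(t)}\,dG_2(s)
+\frac{\overline G_2(t-v)\,\overline G_1(v)}{\overline F(t)}.
\]
The middle integral is the crux. For $G_1=G_2=F$ it equals $\overline{F^{*2}}(t)/\overline F(t)$ minus the other three (identified) terms, so \eqref{eq:conveq} forces $\lim_{v\to\infty}\lim_{t\to\infty}\int_v^{t-v}\overline F(t-s)/\overline F(t)\,dF(s)=0$ (Lemma~\ref{F}). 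For general $G_i$ one does not use a pointwise product bound but an integration by parts (Lemma~\ref{lem:G}) to dominate the middle $G$-integral by $k_1k_2$ times the middle $F$-integral plus a boundary term, reducing to the $F$ case. This double-limit control of the window $[v,t-v]$ --- rather than a fixed-$M$ ``both large'' event with a pointwise sup bound --- is the missing idea, and it is exactly what produces the $(k_1k_2+m_\alpha(G_1)+m_\alpha(G_2))\eta(t)$ structure in \eqref{convol2}. Your treatment of the two outer integrals and the deduction of $G_i\in\mathcal S_\R(\alpha)$ from \eqref{convol} are fine.
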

For completeness reasons, the proofs of the above Lemma, and some other discussions regarding the class $\mathcal{S}_{\R}(\alpha)$,  can be found in the Appendix. 
In the case of $\mathcal{S}(\alpha)$, for \eqref{convol}, one would classically refer to~\cite{Cline1986}.  \medskip

Condition~\eqref{eq:conveq} present in Definition~\ref{df:sa} seems to be and in fact it is technical. Since $\mathcal{S}_{\R}(\alpha)$ is the class we are mainly interested in, before we proceed any further we will present some sufficient conditions for 
$F \in \mathcal{S}_{\R}(\alpha)$. As it was proved by Kl{\"u}ppelberg~\cite{kluppelberg1989subexponential} (see Theorem~2.1), for 
$\alpha >0$
\begin{equation}
	F \in \mathcal{S}(\alpha ) \quad \Leftrightarrow \quad h(x) = e^{\alpha x}\overline{F}(x) \ind_{[0,\infty)}(x) \in \mathcal{S}_d,
\end{equation}  
 where $\mathcal{S}_d$ denotes the class of subexponential densities, namely $h \in \mathcal{S}_d$ if
\begin{equation*}
	h(x-y) \sim h(x)
\end{equation*}
for any fixed $y \in \RR$ as $x \to \infty$, $m_0 = \int_0^{\infty} h(y) dy < \infty$ and
\begin{equation*}
	\int_0^x h(x-y)h(y) dy \sim 2m_0 h(x).
\end{equation*}
Knowing sufficient conditions for $ h \in \mathcal{S}_d$, here Theorems 4.15 and 4.16 in~\cite{foss2011introduction}, we can rewrite those in terms of $F$ and obtain the next two Corollaries. 

\begin{cor}\label{cor:dom}
	Assume that $\overline{F^+}(t) \sim e^{-\alpha t} K(t)$, where $\alpha>0$, $K(x-y) \sim K(x)$ for any fixed $y \in \RR$ as 
	$x \to \infty$. 
	If one can find a constant $c>0$ for which $K(2x) \geq cK(x)$ for sufficiently large $x$, then $F \in \mathcal{S}_{\R}(\alpha)$ provided that 
	$\int K(t) dt <\infty$.
\end{cor}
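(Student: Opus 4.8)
The plan is to obtain the corollary as a direct consequence of Kl\"uppelberg's characterisation of $\mathcal{S}(\alpha)$ in terms of subexponential densities recalled above. First I would unwind the two equivalences already noted in this section: since $F \in \mathcal{S}_{\R}(\alpha)$ if and only if $F^+ \in \mathcal{S}(\alpha)$, and, $F^+$ being supported on $[0,\infty)$, $F^+ \in \mathcal{S}(\alpha)$ if and only if
\[
	h(x) := e^{\alpha x}\,\overline{F^+}(x)\,\ind_{[0,\infty)}(x) \in \mathcal{S}_d,
\]
the whole statement reduces to verifying that $h$ is a subexponential density.

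Next I would check the three defining properties of $\mathcal{S}_d$ for $h$, reading them off the hypotheses on $K$. The assumption $\overline{F^+}(t) \sim e^{-\alpha t}K(t)$ gives $h(x) \sim K(x)$ as $x \to \infty$; together with $K(x-y) \sim K(x)$ this yields the long-tailedness $h(x-y) \sim h(x)$ for every fixed $y \in \R$. Since $\overline{F^+}(x) \le 1$, we have $h(x) \le e^{\alpha x}$ for all $x \ge 0$, so $h$ is locally bounded; splitting $\int_0^\infty h$ into a compact part (bounded via $h(x) \le e^{\alpha x}$) and a tail part (bounded via $h(x) \le 2K(x)$ for large $x$), the hypothesis $\int K(t)\,\ud t < \infty$ gives $m_0 := \int_0^\infty h(x)\,\ud x < \infty$. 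Finally, the bounded-decrease hypothesis $K(2x) \ge cK(x)$ transfers to $h$: for all sufficiently large $x$ one has $\tfrac12 K(x) \le h(x) \le 2K(x)$ together with $h(2x) \ge \tfrac12 K(2x)$, whence
\[
	h(2x) \ge \tfrac12 K(2x) \ge \tfrac{c}{2} K(x) \ge \tfrac{c}{4}\, h(x).
\]
Thus $h$ is a locally bounded, integrable, long-tailed density with $h(2x) \ge \tfrac{c}{4} h(x)$ for large $x$, so Theorems~4.15 and~4.16 of \cite{foss2011introduction} apply and yield $\int_0^x h(x-y)h(y)\,\ud y \sim 2m_0 h(x)$, i.e.\ $h \in \mathcal{S}_d$. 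Combined with the first step this proves $F \in \mathcal{S}_{\R}(\alpha)$.

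The point requiring the most care is the interface between the corollary's hypotheses, which concern the auxiliary function $K$ (not assumed to be a density, nor even locally bounded), and the sufficient conditions of \cite{foss2011introduction}, which are phrased for the density $h$ itself. Concretely, one must transport both the finiteness of the integral and the bounded-decrease property across the asymptotic equivalence $h \sim K$ — this is precisely the content of the displayed inequality and of the splitting of $\int_0^\infty h$ above — after which everything is a matter of matching definitions. I do not expect any genuine difficulty beyond this bookkeeping, which is why the statement is recorded as a corollary of Kl\"uppelberg's characterisation rather than proved from scratch.
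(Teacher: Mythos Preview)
Your proposal is correct and follows exactly the approach the paper indicates: the paper does not give a standalone proof but simply states that the corollary is obtained by rewriting the sufficient conditions of Theorems~4.15 and~4.16 in \cite{foss2011introduction} for $h \in \mathcal{S}_d$ in terms of $F$, via Kl\"uppelberg's equivalence. Your write-up supplies precisely this bookkeeping---transferring long-tailedness, integrability, and the bounded-decrease property from $K$ to $h$ across the asymptotic equivalence $h \sim K$---which is all that is needed.
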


\begin{cor}\label{cor:convex}
	Suppose we have $\overline{F^+}(t)\sim e^{-\alpha t} K(t)$ for $\alpha>0$.
	If $ -\log K(x)$ is eventually concave and one can find a function $f\colon \RR \to \RR$ such that	
	\begin{itemize}
		\item $f(x) \leq x/2$ but $f(x) \to \infty$ as $x \to \infty$,
		\item $K$ if $f$-insensitive, i.e. $K(x-y) \sim K(x)$ as $x \to \infty$, uniformly in $y \leq f(x)$,
		\item $xK(f(x)) \to 0$ as $x \to \infty$,
	\end{itemize}
	then $F \in \mathcal{S}_{\R}(\alpha)$ if additionally $\int K(t) dt <\infty$. 
\end{cor}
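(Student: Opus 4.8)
The plan is to deduce the corollary from the subexponential-density criteria recalled above, by passing through Kl{\"u}ppelberg's characterisation. Combining the equivalence $F\in\mathcal S_{\R}(\alpha)\Leftrightarrow F^{+}\in\mathcal S(\alpha)$ with~\cite{kluppelberg1989subexponential} applied to $F^{+}$, it suffices to prove that
$$
	h(x)=e^{\alpha x}\,\overline{F^{+}}(x)\,\ind_{[0,\infty)}(x)
$$
is a subexponential density. Since $\overline{F^{+}}(t)\sim e^{-\alpha t}K(t)$ we have $h(x)\sim K(x)$ as $x\to\infty$, and because $\int K<\infty$ we may normalise $K\ind_{[0,\infty)}$ to a probability density $k$, so that $h\sim k$.

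The main steps would then be as follows. First, I would show $k\in\mathcal S_{d}$ by applying the relevant one of Theorems~4.15--4.16 of~\cite{foss2011introduction}, whose hypotheses hold for $k$: long-tailedness $k(x-y)\sim k(x)$ for fixed $y$ follows from $f$-insensitivity once $f(x)\to\infty$; $-\log k=-\log K$ is eventually concave by assumption; and $f$ is an admissible truncation function, since $f(x)\le x/2$, $f(x)\to\infty$, $k$ is $f$-insensitive, and $x\,k(f(x))\to0$. Second, I would pass from $k$ to $h$: from $h\sim k$ and the closure of $\mathcal S_{d}$ under asymptotic equivalence of integrable densities one gets $h\in\mathcal S_{d}$; this closure is obtained by splitting $\int_{0}^{x}h(x-y)h(y)\,\ud y$ into the two end zones (controlled through long-tailedness and $h\sim k$) and the negligible middle zone, and comparing with $\int_{0}^{x}k(x-y)k(y)\,\ud y\sim 2k(x)$. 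Then $F\in\mathcal S_{\R}(\alpha)$ follows.

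I expect the delicate point to be precisely this last transfer from $k$ to $h$: the property that makes~\cite{foss2011introduction} directly applicable, namely eventual concavity of $-\log K$, is not preserved under asymptotic equivalence and so cannot be carried over to $h$ as is; one genuinely has to run (or quote) the convolution comparison between $h$ and $k$. The rest is bookkeeping --- matching the normalisations of $F^{+}$, $K$ and $k$, and checking that the mild measurability and eventual-monotonicity assumptions required to quote~\cite{foss2011introduction} hold automatically here, e.g. eventual monotonicity of $K$ follows from concavity of $-\log K$ together with $K\to0$.
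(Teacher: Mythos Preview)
Your approach is exactly the one the paper intends: the corollary is stated without proof, with the remark that it follows by rewriting the hypotheses of Theorems~4.15--4.16 in~\cite{foss2011introduction} through Kl{\"u}ppelberg's characterisation $F^{+}\in\mathcal S(\alpha)\Leftrightarrow h\in\mathcal S_{d}$. Your only addition is making explicit the transfer from $k$ to $h$ via asymptotic equivalence, which the paper leaves implicit; this is standard (closure of $\mathcal S_{d}$ under tail equivalence of integrable long-tailed densities) and your outline of it is correct.
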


\begin{ex}
	By Corollary~\ref{cor:dom}, if $\overline{F^+}(t) \sim c e^{-\alpha t} t^{p}$ for $t>t_0$, $\alpha, c >0$ and $p<-1$ then 
	$F \in \mathcal{S}_{\R}(\alpha)$. If on the other hand $\overline{F^+}(t) \sim c \exp\{-\alpha t -\beta t^{\gamma} \}$ for 
	$\alpha, \beta, c >0$ and $\gamma \in (0,1)$ then again $F \in \mathcal{S}_{\R}(\alpha)$ but this time by Corollary~\ref{cor:convex}
	with $f(x) = \log^{1/\gamma}(x)$.
\end{ex}

Other sufficient conditions, going beyond Corollaries~\ref{cor:convex} and~\ref{cor:dom} can be found in~\cite{embrechts1983property} and \cite{Cline1986}.

\section{Random Difference Equation}\label{sec:rde}

We will start with the case when $\Psi(x) = Ax+B$, in order to introduce the set-up to the problem and deliver some enlightening examples. For the sake of transparency, throughout this section we will assume that $A>0$ a.s. The results in full generality, including the case of two-sided $A$ will be treated in Section~\ref{sec:ifs}. For the needs of this Section, one can just take an iid sequence of two-dimensional random vectors $\{ (A_n, B_n)\}_{n \geq 1}$, with $A_n>0$, and consider a~Markov chain given via
\begin{equation}\label{eq:defXn}
	X_{n} = A_{n}X_{n-1}+B_{n} \quad \mbox{for } n \geq 1.
\end{equation}
The only condition we impose on $X_0$ at this point is independence form $\{(A_n, B_n)\}_{n \geq 1}$. By a well-known fact, if 
\begin{equation*}
	\EE [\log(A)] < 0 \quad \mbox{and} \quad \EE [\log(|B|+1)] <\infty,
\end{equation*}
then the Markov chain $\{X_n\}_{n \geq 0}$ possesses a~unique stationary distribution which can be represented by a random variable of the form
\begin{equation}\label{eq:perpetuity}
	X \stackrel{d}{=} \sum_{k=0}^{\infty} B_{k+1}\prod_{j=1}^kA_j,
\end{equation}  
see~\cite{vervaat1979stochastic} for the above or \cite{goldie2000stability} of necessary and sufficient conditions for the convergence. By the stationary, $X$ will be  a solution to the stochastic equation 
\begin{equation}\label{eq:ax+b}
	X \stackrel{d}{=} AX+B, \quad \mbox{$X$ independent from $(A,B)$}.
\end{equation}
We would like to investigate $\PP[X>t]$ in the case, where $A$ and $B$ have comparable tails. We will work under the assumption that  $\log A \in \mathcal{S}_{\R}(\alpha)$. To state the conditions in Definition~\ref{df:saR} explicitly, we will consider $A$ with regularly varying tail, namely for any $ y >0$ satisfying 
\begin{equation}\label{eq:Atail1}
	\PP[A> ty] \sim y^{-\alpha}\PP[A>t]
\end{equation}
as $t \to \infty$ and $\EE[A^{\alpha}]<\infty$. Moreover, denoting by $A'$ an independent copy of $A$, assume that
\begin{equation}\label{eq:tailA2}
\PP[AA'>t] \sim 2 \EE[A^{\alpha}]\PP[A>t]
\end{equation}
for some $\alpha >0$. The case of $\alpha =0$, when $\mathcal{S}(0) = \mathcal{S}$ is the class of subexponential distributions, was treated in \cite{dyszewski2016iterated, korshunov2016look, tang2016random}. 
To ensure that Cram\'er's condition is not satisfied, assume
\begin{equation}\label{eq:Amoment1}
	\EE[A^{\alpha}] <1.
\end{equation}
At this point it is worth noting that condition \eqref{eq:Atail1}  implies in particular that for any $\varepsilon>0$
\begin{equation*}
	\EE[A^{\alpha +\varepsilon}] =\infty,
\end{equation*}
see for example~\cite{embrechts1982convolution}.
As a particular consequence, the results of Grey~\cite{grey1994regular} will also not apply directly. However, as we will see, one can use a similar approach as the one presented in~\cite{grey1994regular}.

Under the above, the tails of $X$ and $A$ are weakly equivalent, provided that the tail of $B$ is of the same order. Note, that if  
\begin{equation}\label{eq:tailsB}
	\limsup_{t \to \infty} \frac{\PP[|B|>t]}{\PP[A>t]} = c_{|B|} < \infty.
\end{equation}
then in particular $\EE[|B|^{\alpha}]< \infty$.
In view of~\eqref{eq:Amoment1},  Minkowski's inequality entails 
\begin{equation*}
		\EE[|X|^{\alpha}]<\infty,
\end{equation*}
for details we refer to Alsmeyer et al.~\cite{alsmeyer2009distributional} or to Section~\ref{sec:proofs}. Without any further assumptions, we were able to prove, that the tails of $X$ and $A$ are weakly equivalent. Next Proposition will follow form our main result, presented in the Section~\ref{sec:ifs}.

\begin{prop}\label{prop:weak}
	Suppose $ A> 0$ and that conditions \eqref{eq:Atail1} - \eqref{eq:tailsB} hold true. Then the Markov chain $\{ X_n \}_{n\geq 0}$ converges 
	weakly to $X$. Moreover, as $t \to \infty$,
	\begin{equation*}
		\PP[|X|>t] = O(\PP[A>t]).
	\end{equation*}
	Furthermore, if $\PP[A>t, B < - t] = o(\PP[A>t])$, then
	\begin{equation*}
		\PP[|X|>t] \asymp \PP[A>t].
	\end{equation*}
\end{prop}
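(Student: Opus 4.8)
The plan is to establish the three assertions in turn, building on the representation \eqref{eq:perpetuity} and the convolution machinery of Lemma~\ref{lem:convsa}. The weak convergence and the moment bound $\EE[|X|^\alpha]<\infty$ are classical: since \eqref{eq:tailsB} forces $\EE[|B|^\alpha]<\infty$ and $\EE[\log A]<0$ follows from \eqref{eq:Amoment1} together with \eqref{eq:Atail1} (regular variation of index $\alpha$ and $\EE[A^\alpha]<1$ give $\EE[A^s]<1$ for $s$ slightly below $\alpha$, hence by Jensen $\EE[\log A]<0$), the series $X=\sum_{k\ge0}B_{k+1}\prod_{j=1}^kA_j$ converges a.s.\ and in $L^\alpha$ by Minkowski as indicated, and $\{X_n\}$ converges weakly to $X$ regardless of $X_0$. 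So the first sentence needs only a citation to \cite{vervaat1979stochastic, goldie2000stability}.

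For the upper bound $\PP[|X|>t]=O(\PP[A>t])$ I would set $F$ to be the distribution of $\log A$, which lies in $\Scal_\RR(\alpha)$ by \eqref{eq:Atail1}--\eqref{eq:tailA2}; equivalently, writing $\Fbar(t):=\PP[A>t]$ we have $\overline{F^+}$ regularly varying of index $-\alpha$ and $\overline{A A'}(t)\sim 2\EE[A^\alpha]\Fbar(t)$, which is precisely \eqref{eq:conveq} transported through the exponential. The strategy, mirroring Grey~\cite{grey1994regular}, is to iterate the stochastic equation: from \eqref{eq:ax+b}, $X\stackrel{d}{=}AX'+B$ with $X'$ an independent copy of $X$. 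One shows by induction on the truncated perpetuity $X^{(n)}=\sum_{k=0}^{n}B_{k+1}\prod_{j=1}^k A_j$ that $\limsup_t \PP[|X^{(n)}|>t]/\Fbar(t)\le c_n$ with $c_n$ bounded uniformly in $n$. The inductive step uses \eqref{convol1} of Lemma~\ref{lem:convsa} applied to the convolution of the distribution of $A X^{(n-1)}$ — whose tail is controlled since $\PP[AX^{(n-1)}>t]\le \EE[A^\alpha]\,(\text{something})\,\Fbar(t)$ by a Breiman-type estimate using $\EE[|X^{(n-1)}|^{\alpha+\eps}]$... here is the first delicate point: $\EE[|X|^{\alpha+\eps}]=\infty$ in general, so one cannot quote Breiman directly and must instead run the argument with the $\limsup$-version of the convolution lemma \eqref{convol2}, absorbing the contribution of $\prod A_j$ via the factor $\EE[A^\alpha]<1$ so that the recursion $c_n \le c_{|B|}m_\alpha + \EE[A^\alpha]c_{n-1} + (\text{error})$ closes to a geometric series. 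Passing $n\to\infty$ and using that $X^{(n)}\to X$ a.s.\ with uniform tail control (plus Fatou / the fact that the tails are dominated) gives $\limsup_t\PP[|X|>t]/\Fbar(t)<\infty$.

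For the lower bound, under the extra hypothesis $\PP[A>t,\ B<-t]=o(\Fbar(t))$ — which rules out cancellation between the $AX$ term and $B$ on the relevant event — I would argue $\PP[X>t]\gtrsim \PP[A>t]$ from below. Write $X\stackrel{d}{=}AX'+B$. On the event $\{A>t/\delta\}$ for small $\delta$, the product $AX'$ is large whenever $X'>0$, and one uses \eqref{convol3}--\eqref{convol4} together with a lower Breiman-type bound: $\PP[AX'>t]\ge \Fbar(t)\,\EE[(X'_+)^\alpha\wedge M]-o(\Fbar(t))$ for large $M$, which is strictly positive provided $\PP[X>0]>0$ (if $\PP[X>0]=0$ one works with $\PP[X<-t]$ and the symmetric statement). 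The hypothesis on $\{A>t, B<-t\}$ ensures the $+B$ perturbation does not destroy this event. Combining with the upper bound yields $\PP[|X|>t]\asymp\Fbar(t)$.

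The main obstacle is the upper bound's inductive step: because no moment of order $>\alpha$ is finite, the usual Breiman/Goldie truncation is unavailable, and everything must be pushed through the quantitative estimates \eqref{convol2}/\eqref{convol4} with careful bookkeeping of the error terms $\eta(t)$ so that they remain summable against the geometric weights $\EE[A^\alpha]^n$; showing the limit distribution inherits the bound (rather than just each $X^{(n)}$) requires an additional uniform-integrability-type argument for the tails, which is where convolution equivalence — via closure of $\Scal_\RR(\alpha)$ under the operations in Lemma~\ref{lem:convsa} — does the real work.
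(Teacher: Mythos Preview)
Your weak-convergence and $L^\alpha$ arguments are fine; the substance is in the tail bounds.

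For the upper bound, you iterate on the truncated perpetuity $X^{(n)}$ and correctly identify the obstacle: even if $c_n:=\limsup_t \PP[|X^{(n)}|>t]/\PP[A>t]$ stays bounded, nothing you have written lets you pass this to $X$. The threshold $t_n$ beyond which $\PP[|X^{(n)}|>t]\le(C+\eps)\PP[A>t]$ may drift to infinity with $n$, so $\PP[|X|>t]/\PP[A>t]$ could still be unbounded. Invoking ``closure of $\Scal_\RR(\alpha)$ under Lemma~\ref{lem:convsa}'' does not address this: closure tells you that a distribution with the right tail ratio lies in the class, not that a limit of such distributions inherits the bound. Nor does the quantitative inequality \eqref{convol2} rescue the iteration, because the function $\eta$ there depends on the inputs $G_i$ (through the level at which $\overline{G_i}/\overline{F}$ enters its limsup neighbourhood), and this dependence propagates uncontrollably through the recursion. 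Your attempt to close by bounding the remainder $X-X^{(n)}=\Pi_{n+1}X'$ would require exactly the tail bound on $X'$ that you are trying to prove.

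The paper (Proposition~\ref{prop:2nd}, which specialises to the affine case) sidesteps this via Grey's device: rather than iterating on partial sums, one manufactures an initial law $X_0^*$ for which the chain is tail-monotone. Concretely, set $Y=TA'\ind_{\{A'>y\}}$ with $A'$ an independent copy of $A$; then $\PP[Y>t]\sim T^\alpha\PP[A>t]$ while $\EE[Y^\alpha]=T^\alpha\EE[A^\alpha\ind_{\{A>y\}}]$ can be made as small as desired by choosing $y$ large. One application of Lemma~\ref{lem:convsa} shows that for suitable $T,y$ and all $t>t_0$ one has $\PP[AY+|B|>t]\le\PP[Y>t]$. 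Taking $X_0^*$ to be $Y$ conditioned on $\{Y>t_0\}$ gives $\PP[A X_0^*+|B|>t]\le\PP[X_0^*>t]$ for \emph{every} $t$, and by induction $\PP[X_n^*>t]\le\PP[X_0^*>t]$ for all $n$ and $t$ simultaneously; the bound then passes to the weak limit with no exchange of limits. This is the missing idea.

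For the lower bound, your Breiman-type plan would work but is unnecessarily elaborate. One iteration suffices: $\PP[X>t]=\PP[AX+B>t]\ge\PP[X\ge1]\,\PP[A+B>t]$, and
\[
\PP[A+B>t]\ \ge\ \PP[A>2t]-\PP[A>2t,\ B<-t]\ \ge\ \PP[A>2t]-\PP[A>t,\ B<-t]\ \sim\ 2^{-\alpha}\PP[A>t]
\]
by the extra hypothesis.
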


At this point we are obliged to mention that the constants we obtain in the claims of Proposition~\ref{prop:weak} are not optimal. Since our main goal is establishing the precise asymptotic of  $\PP[X>t]$ we will not pursue the optimal constants in Proposition~\ref{prop:weak}. \medskip
 
To be able to determine the exact asymptotic of $\PP[X>t]$ some additional conditions need to be imposed.  Namely, assume that 
\begin{equation}\label{eq:tailequiv+}
	\lim_{t \to \infty}\frac{\PP[Ay+ B>t]}{\PP[A>t]}=  f_+(y), \quad \mbox{for } y \in {\rm supp}\mathcal{L}(X),
\end{equation}
and
\begin{equation}\label{eq:tailequiv-}
	\lim_{t \to \infty}\frac{\PP[Ay+ B<-t]}{\PP[A>t]}=  f_-(y), \quad \mbox{for }y \in {\rm supp}\mathcal{L}(X),
\end{equation}
where $f_\pm \colon \RR \to [0, +\infty)$ are some measurable function and $\mathcal{L}(X)$ denotes the distribution of $X$.  Imposing~\eqref{eq:tailequiv+} and \eqref{eq:tailequiv-} will allow us to investigate the case of dependent $A$ and $B$ with comparable tails. Note that under the above $\PP[|B|>t] \sim (f_+(0) + f_-(0)) \PP[A>t]$, so that~\eqref{eq:tailequiv+} and \eqref{eq:tailequiv-} imply~\eqref{eq:tailsB}.  As one of the consequences coming from combining conditions~\eqref{eq:tailequiv+} and~\eqref{eq:Atail1} is a bound for function $f_+$. Namely, we may write for any $y \geq 0$
\begin{align*}
	\PP[Ay + B >t]  & = \PP[Ay > t/2, \: Ay+ B>t] + \PP[Ay\leq t/2, \: Ay+B>t]\\
	&  \leq \PP[Ay > t/2] + \PP[B>t/2] \sim \left((2y)^{\alpha} + 2^{\alpha}f_+(0)\right) \PP[A>t],
\end{align*}
while for $y \leq 0$, 
\begin{equation*}
	\PP[Ay+B > t] \leq \PP[B>t] \sim f_+(0)\PP[A>t]. 
\end{equation*}
Whence, since $A > 0$, for $y \in \RR$,
\begin{equation*}
	f_+(y) \leq 2^{\alpha}(y_+^{\alpha}+f_+(0)),
\end{equation*}
where $x_+=x^+ = \max \{ 0, x\}$. Thus, for example $\EE[f_+(X)]<\infty$. In a similar fashion we obtain the bound for $f_-$ of the form
$$
	f_-(y) \leq 2^{\alpha}(y_-^{\alpha} + f_-(0))
$$
and as a consequence $\EE[f_-(X)]<\infty$. Denote 
$$
		\mu_{\pm} = \EE[A_{\pm}^{\alpha}].
$$ 
Assuming the presented conditions, we aim to prove the following result.  

\begin{thm}\label{thm:perpetuity}
	Assume \eqref{eq:Atail1} - \eqref{eq:tailequiv-} and that $A > 0$ a.s. The Markov chain $\{ X_n \}_{n\geq 0}$ 
	converges weakly to $X$. 
	Moreover,
	\begin{equation*}
		\PP[X>t] \sim \frac{\EE[f_+(X)]  }{1-\mu_+  } \PP[A>t].
	\end{equation*}
\end{thm}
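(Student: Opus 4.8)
The plan is to follow the classical Grey-type renewal argument, iterated through the perpetuity, but carried out with the convolution-equivalence machinery of Lemma~\ref{lem:convsa} rather than with regular variation alone. Write $X \stackrel{d}{=} AX + B$ with $X$ independent of $(A,B)$. The first step is qualitative: using \eqref{eq:tailsB} (which follows from \eqref{eq:tailequiv+}--\eqref{eq:tailequiv-}) together with \eqref{eq:Amoment1} and Minkowski's inequality one gets $\EE[|X|^\alpha] < \infty$, hence $\EE[f_+(X)] < \infty$ by the bound $f_+(y) \le 2^\alpha(y_+^\alpha + f_+(0))$ established above, and similarly $\EE[f_-(X)] < \infty$; this also yields the weak convergence of $\{X_n\}$ via the standard perpetuity argument (Vervaat). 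The second step is to produce the $O$-bound $\PP[X>t] = O(\PP[A>t])$ — this is essentially Proposition~\ref{prop:weak}, so I may quote it — which guarantees that the quantity $\limsup_{t\to\infty}\PP[X>t]/\PP[A>t]$ is finite, a fact needed before one can take limits.

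The heart of the argument is a fixed-point/contraction estimate for the ratio $r(t) := \PP[X>t]/\PP[A>t]$. Conditioning on $X$ in $X \stackrel{d}{=} AX+B$ and writing, for a truncation level $K$,
\begin{equation*}
	\PP[X>t] = \PP[AX+B>t,\ X\le K] + \PP[AX+B>t,\ X> K],
\end{equation*}
one treats the first term by dominated convergence using \eqref{eq:tailequiv+}: for $y$ fixed, $\PP[Ay+B>t] \sim f_+(y)\PP[A>t]$, and the domination $\PP[Ay+B>t] \le (\,(2y_+)^\alpha + 2^\alpha f_+(0) + o(1)\,)\PP[A>t]$ lets one pass the limit inside, giving $\EE[f_+(X)\ind_{X\le K}]$. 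For the second term one needs a tail estimate for $\PP[AX+B>t,\ X>K]$; here one decomposes according to whether $A$ is large or $X$ is large. When $A$ is large (of order $t$) one uses \eqref{eq:Atail1} and bounds the contribution by roughly $\EE[X_+^\alpha \ind_{X>K}]\PP[A>t]$ up to a convolution-equivalence correction; when $X$ carries the largeness, $\PP[AX>t,\ X>K,\ A \text{ moderate}]$ feeds back a term of the form $\mu_+\, r(t)\,\PP[A>t](1+o(1))$ plus an $\varepsilon$-error controlled by $\EE[A^\alpha\ind_{A>M}]$ which is small since $\EE[A^\alpha]<\infty$. Combining, one arrives at
\begin{equation*}
	\limsup_{t\to\infty} r(t) \le \EE[f_+(X)\ind_{X\le K}] + \mu_+ \limsup_{t\to\infty} r(t) + \text{(errors in $K$, $M$)},
\end{equation*}
and, by the matching lower bound obtained the same way using \eqref{convol3}--\eqref{convol4}, a reverse inequality for $\liminf r(t)$. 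Since $\mu_+ = \EE[A_+^\alpha] = \EE[A^\alpha] < 1$, the map is a genuine contraction; letting $K\to\infty$ and the error parameters vanish pins down $\lim r(t) = \EE[f_+(X)]/(1-\mu_+)$.

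The main obstacle is the rigorous handling of the joint tail $\PP[AX+B>t,\ X>K]$: unlike in the independent case, $A$ and $B$ are dependent, so one cannot simply factor, and the convolution-equivalence of $\log A \in \mathcal{S}_{\R}(\alpha)$ must be invoked through Lemma~\ref{lem:convsa} to control $\PP[AA'>t]$-type interactions — the term $\PP[A X > t]$ with $X$ itself heavy-tailed of the \emph{same} order as $A$ is exactly the ``both coefficients contribute'' phenomenon and is where \eqref{eq:tailA2} (equivalently $\log A \in \mathcal{S}_\R(\alpha)$) is indispensable. A secondary technical point is uniformity: the convergence in \eqref{eq:tailequiv+} holds pointwise in $y$, so the dominated-convergence step needs the explicit domination recorded before the theorem, and one must check that the error terms from the convolution-equivalence inequalities \eqref{convol2} are uniform enough to survive the iteration. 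I expect the two-sided ($\liminf$/$\limsup$) bookkeeping with the $\eta(t)$ error function from Lemma~\ref{lem:convsa} to be the most delicate part of writing this out cleanly.
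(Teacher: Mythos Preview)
Your proposal is correct and follows essentially the same route as the paper. The paper obtains Theorem~\ref{thm:perpetuity} as the special case $A>0$ (hence $\mu_-=0$) of the general IFS result Theorem~\ref{thm:ifs}, whose proof is precisely the mechanism you describe: first the $O$-bound (Proposition~\ref{prop:2nd}, your Proposition~\ref{prop:weak}), then the one-step inequalities of Lemmas~\ref{lem:1iterationsup}--\ref{lem:1iterationinf} obtained by splitting $\{|R|\le\gamma\}$ versus $\{R>\gamma\}$, handling the bounded part by dominated convergence with~\eqref{eq:tailequiv+} and the unbounded part by Lemma~\ref{lem:convsa} applied to $\log A$ and $\log R_\gamma$, and finally the contraction $\limsup r(t)\le \mu_+\limsup r(t)+\EE[f_+(X)]$ together with the matching $\liminf$ bound. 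Your ``$A$ large versus $X$ large'' heuristic is exactly what Lemma~\ref{lem:convsa} packages; the paper invokes the lemma directly rather than reproving that split, and isolates the $B$-contribution via the $(1-\varepsilon)t$/$\varepsilon t$ device before applying it, which you should also do to cleanly separate $B$ from $AX$ on $\{X>K\}$.
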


Note that under the assumptions of Theorem~\ref{thm:perpetuity} the same comment may be made regarding the left tail of $X$. More precisely, 	
\begin{equation*}
	\PP[X<-t] \sim \frac{\EE[f_-(X)]  }{1-\mu_+  } \PP[A>t].
\end{equation*}

To see a few examples, how~\eqref{eq:tailequiv+} and \eqref{eq:tailequiv-} come into play, we state the following Corollary, which 
treats the case when the tail of $A\wedge B$ is negligible. This covers the possibility that $A$ and $B$ are independent, as treated in~\cite{li2015interplay}, and the possibility that the tail of $B$ is negligible, as treated by Kevei~\cite{kevei2016note}. For simplicity we will assume that $B\geq 0$ so that $f_-(y)=0$ for any $y \in  \mathcal{L}(X) \subseteq[0,+\infty)$.

\begin{cor}\label{cor:indep}
	Assume $A,B \geq 0$ and that \eqref{eq:Atail1} - \eqref{eq:Amoment1} hold and moreover that
	\begin{equation*}
		\PP[B>t]\sim c_B\PP[A>t], \quad \PP[A>t, \:B>t] = o(\PP[A>t]),
	\end{equation*}
	for some $c_B \geq 0$. Then, as $t \to \infty$,
	\begin{equation*}
		\PP[X>t] \sim \frac{\EE[X_+^{\alpha}] +c_B }{1 -\EE[A^{\alpha}]  } \PP[A>t].
	\end{equation*}
\end{cor}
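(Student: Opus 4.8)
\textbf{Proof proposal for Corollary~\ref{cor:indep}.}
The plan is to derive the corollary from Theorem~\ref{thm:perpetuity} by verifying that the hypotheses~\eqref{eq:tailequiv+} and~\eqref{eq:tailequiv-} hold with explicit functions $f_\pm$, and then identifying the constant $\EE[f_+(X)]$. Since $A,B\geq 0$ and $B\geq 0$ forces ${\rm supp}\,\mathcal{L}(X)\subseteq[0,\infty)$, and since $Ay+B\geq 0$ for $y\geq 0$, we immediately get $f_-(y)=0$ for all $y\in{\rm supp}\,\mathcal{L}(X)$, hence $\EE[f_-(X)]=0$, which is consistent with the corollary not mentioning a left tail. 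So the real work is to show that for fixed $y\geq 0$,
\begin{equation*}
	\lim_{t\to\infty}\frac{\PP[Ay+B>t]}{\PP[A>t]} = y^\alpha + c_B =: f_+(y).
\end{equation*}

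First I would treat the case $y>0$ and split, exactly as in the bound preceding the statement of Theorem~\ref{thm:perpetuity}, according to whether the ``$Ay$'' part or the ``$B$'' part is the dominant contribution. Write, for $\delta\in(0,1/2)$,
\begin{equation*}
	\PP[Ay+B>t] \leq \PP[Ay>(1-\delta)t] + \PP[B>\delta t] + \PP[Ay>\delta t,\ B>\delta t].
\end{equation*}
By~\eqref{eq:Atail1} the first term is asymptotically $((1-\delta)/y)^{-\alpha}\,y^{\alpha}\cdot$\,(wait, more simply) $\sim \big(y/(1-\delta)\big)^{\alpha}\PP[A>t]$; the second is $\sim c_B\delta^{-\alpha}\PP[A>t]$; the third is $o(\PP[A>t])$ because $\{Ay>\delta t\}\subseteq\{A>\delta t/y\}$ and the joint-tail negligibility assumption $\PP[A>s,B>s]=o(\PP[A>s])$ transfers to $\PP[A>\delta t/y, B>\delta t]$ via regular variation (after possibly enlarging the constant, using $\PP[A>cs,B>cs]\le \PP[A>\min(c,1)s, B>\min(c,1)s]$ and monotonicity). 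This is the step I expect to require the most care: making the joint negligibility at scale $t$ imply negligibility at the mismatched scales $\delta t/y$ and $\delta t$, uniformly enough to push through. For the lower bound I would use
\begin{equation*}
	\PP[Ay+B>t] \geq \PP[Ay>(1+\delta)t] + \PP[B>(1+\delta)t] - \PP[Ay>\delta t,\ B>\delta t],
\end{equation*}
obtained from inclusion–exclusion on the events $\{Ay>(1+\delta)t\}$ and $\{B>(1+\delta)t\}$ (which together imply $Ay+B>t$ only if... actually one uses that the union is contained in $\{Ay+B>t\}$ when $\delta$ small, and the intersection bound controls the overcount). Letting $t\to\infty$ and then $\delta\to0$ pins the limit to $y^\alpha+c_B$. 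The case $y=0$ is immediate from $\PP[B>t]\sim c_B\PP[A>t]$.

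Having established~\eqref{eq:tailequiv+} with $f_+(y)=y^\alpha+c_B$, Theorem~\ref{thm:perpetuity} gives
\begin{equation*}
	\PP[X>t] \sim \frac{\EE[f_+(X)]}{1-\mu_+}\PP[A>t] = \frac{\EE[X^\alpha]+c_B}{1-\EE[A^\alpha]}\PP[A>t],
\end{equation*}
using $\mu_+=\EE[A_+^\alpha]=\EE[A^\alpha]$ since $A\geq0$, and $\EE[f_+(X)]=\EE[X^\alpha]+c_B$; finiteness of $\EE[X^\alpha]$ follows from~\eqref{eq:Amoment1}, \eqref{eq:tailsB} (which is implied here) and Minkowski's inequality as recalled in the text, so $\EE[f_+(X)]<\infty$ as required. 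Note $X_+=X$ here so $\EE[X_+^\alpha]=\EE[X^\alpha]$, matching the statement. The only remaining point is to double-check that the weak-equivalence hypotheses~\eqref{eq:Atail1}--\eqref{eq:Amoment1} feeding Theorem~\ref{thm:perpetuity} are all available: \eqref{eq:Atail1}--\eqref{eq:Amoment1} are assumed outright, \eqref{eq:tailsB} follows from $\PP[B>t]\sim c_B\PP[A>t]$, and \eqref{eq:tailequiv+}--\eqref{eq:tailequiv-} were just verified; so the corollary follows.
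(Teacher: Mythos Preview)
Your approach is exactly the paper's: reduce to Theorem~\ref{thm:perpetuity} by verifying~\eqref{eq:tailequiv+} with $f_+(y)=y^\alpha+c_B$ and~\eqref{eq:tailequiv-} with $f_-\equiv 0$. There is, however, a genuine slip in your upper bound. As written,
\[
\PP[Ay+B>t]\leq \PP[Ay>(1-\delta)t]+\PP[B>\delta t]+\PP[Ay>\delta t,\ B>\delta t]
\]
is certainly true (indeed the third term is redundant), but the middle term contributes $c_B\delta^{-\alpha}$ in the limit, which blows up as $\delta\to0$ and therefore cannot give the limsup $\le y^\alpha+c_B$. The decomposition you need (and the one the paper uses) replaces the middle term by $\PP[B>(1-\delta)t]$: if $Ay+B>t$ while both $Ay\le(1-\delta)t$ and $B\le(1-\delta)t$, then necessarily $Ay>\delta t$ and $B>\delta t$, so the three events do cover. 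Now the first two terms yield $(y^\alpha+c_B)(1-\delta)^{-\alpha}$ and one sends $\delta\to0$.

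Your lower bound is correct but over-engineered. Since $A,B\ge0$ one has $\{Ay>t\}\cup\{B>t\}\subseteq\{Ay+B>t\}$ already at level $t$, so plain inclusion--exclusion gives
\[
\PP[Ay+B>t]\ge \PP[Ay>t]+\PP[B>t]-\PP[Ay>t,\ B>t],
\]
with no need for the $(1+\delta)t$ threshold. Finally, the ``care'' you anticipate for the joint-negligibility step is handled in the paper in one line: for $y\ge0$,
\[
\PP[Ay>\delta t,\ B>\delta t]\le \PP\big[A(y\vee1)>\delta t,\ B(y\vee1)>\delta t\big]=o\big(\PP[A>(y\vee1)^{-1}\delta t]\big)=o(\PP[A>t]),
\]
using the hypothesis at scale $s=\delta t/(y\vee1)$ and regular variation. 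With these small corrections your argument coincides with the paper's.
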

\begin{proof}
	We will invoke Theorem~\ref{thm:perpetuity}. To see why \eqref{eq:tailequiv+} holds with $f_+(y)=y_+^{\alpha} +c_B$, take an arbitrary $\delta \in (0,1)$ and consider the following two 
	bounds. For the upper one write
	$$
		\PP[Ay+B>t] \leq \PP[Ay>(1-\delta)t] + \PP[B>(1-\delta)t] + \PP[Ay>\delta t, \: B> \delta t],
	$$
	where the last term on the right-hand side is negligible, since in can be bounded viz.
	$$
		\PP[Ay>\delta t, \: B> \delta t] \leq \PP[A (y \vee 1) >\delta t, \: B (y\vee 1)>\delta t] = o(\PP[A> (y\vee 1)^{-1}\delta t]) = o(\PP[A>t]).
	$$
	For the lower bound one can just write simply that
	$$
		\PP[Ay+B>t] \geq \PP[Ay>t] + \PP[B>t] - \PP[Ay>t, \: B>t],
	$$
	where again the last term is negligible. Taking first $t \to \infty$ and then $\delta \to 0$ yields the desired result.
\end{proof}
In turns out that in the case when $\PP[A>t] \sim \PP[B>t]$ the knowledge only of marginals of $A$ and $B$ is insufficient to determine $\PP[X>t]$. We note that by the example in the same vein as the one presented in~\cite{dyszewski2016iterated}.
\begin{ex}
	We wish to compare the tails of $X$ with two types of input, i.e. two different vectors $(A,B)$ with the same marginals. Take any 
	 positive random variable $Z$ such that $\log Z \in \mathcal{S}(2)$ and denote $\mu = \EE[Z]$, $\sigma = \EE[Z^2]<1$. Firstly, consider $A^{(1)}$ and $B^{(1)}$  independent, both distributed as $Z$. 
	Then, by Corollary~\ref{cor:indep} for
	\begin{equation}\label{eq:example}
		X^{(1)} \stackrel{d}{=} A^{(1)}X^{(1)}+B^{(1)}
	\end{equation}
	one has
	\begin{equation*}
		\PP[X^{(1)}> t]  \sim  d_1\PP[Z>t].
	\end{equation*}
	Since the first and the second moment of $X^{(1)}$ that can be computed explicitly using \eqref{eq:example}, we have 
	\begin{equation*}
		d_1 = \frac{2\mu^3 - \mu+1}{(1-\mu)(1-\sigma)^2} .
	\end{equation*}
	For the second input consider $A^{(2)} = B^{(2)}$ with the same distribution as $Z$. Then
	\begin{equation*}
		X^{(2)} \stackrel{d}{=} A^{(2)}X^{(2)}+B^{(2)}
	\end{equation*}
 	can be written as
	\begin{equation*}
		X^{(2)} +1\stackrel{d}{=} A^{(2)}(X^{(2)}+1)+1.
	\end{equation*}
	Invoking Corollary~\ref{cor:indep} once again yields
	\begin{equation*}
		\PP[X^{(2)}> t] \sim \PP[X^{(2)} +1> t]  \sim  d_2\PP[Z>t],
	\end{equation*}
	where
	\begin{equation*}
		d_2 = \frac{2\mu +\sigma (1-\mu)}{(1-\mu)(1-\sigma)^2} .
	\end{equation*}
	Summarizing, $A^{(1)} \stackrel{d}{=} A^{(2)}$, $B^{(1)} \stackrel{d}{=} B^{(2)}$ but the asymptotic of tails of $X^{(1)}$
 	and $X^{(2)}$ are in general different, since $d_1$ differs form $d_2$.
\end{ex}
The Example above shows, that in order to determine the exact asymptotic of $X$ in the case when the tails of $A$ and $B$ are comparable, we need to have some information regarding the joint distribution of the vector $(A,B)$. One example of such information is encrypted in conditions~\eqref{eq:tailequiv+} and \eqref{eq:tailequiv-}.

\section{Iterated random functions}\label{sec:ifs}
Natural direction, in which one can generalize Theorem~\ref{thm:perpetuity} is by allowing $A$ to take negative values. Another one consists of replacing the function $x \mapsto Ax+B$ by a random, Lipschitz function $\Psi$. We aim to obtain both these generalizations in this Section, where we will give a statement of our main result in full generality.
We will now consider a Markov chain with more general form than~\eqref{eq:defXn}, namely
\begin{equation}\label{eq:defIFS}
	R_{n} = \Psi_{n}(R_{n-1}),
\end{equation}
where $\{\Psi_n\}_{n \geq 1}$ is a sequence of iid random Lipschitz functions, $\Psi_n \colon \RR \to \RR$ and $R_0$ is a random variable independent of these functions. Note that, if the functions are of the form $\Psi_n(x) = A_n x +B_n$, recursion~\eqref{eq:defIFS} boils down to~\eqref{eq:defXn}. Let $\Psi$ denote a generic element of $\{\Psi_n\}_{n \geq 1}$. As argued by Elton~\cite{elton1990multiplicative}, under some mild moment assumptions on $\Psi$, $\{R_n\}_{n \geq 0}$ has a unique stationary distribution which, realized by random variable $R$, satisfies
\begin{equation}\label{eq:sfpe}
	R  \stackrel{d}{=}   \Psi (R) \quad R \mbox{ independent of }\Psi.
\end{equation}
Our key assumption concerning the function $\Psi$ is being Lipschitz with the Lipschitz constant 
$$
	L={\rm Lip}(\Psi) = \sup_{t\neq s} \frac{|\Psi(t)-\Psi(s)|}{|t-s|}
$$ 
satisfying
\begin{equation}\label{eq:psi0}
	\EE[\log L] <0.
\end{equation}
 Aiming to build upon observations made in previous section, we would like to argue that $R$ posses  
the same tail asymptotic as $X$ if $\Psi$ is close to the function $x \mapsto Ax +B$. This can be achieved in several ways, for example consider $\Psi$ of the form
\begin{equation}\label{eq:psi1}
	\Psi(x) =  Ax +  \Phi(x), \quad  \mbox{for } x \in {\rm supp}\mathcal{L}(R)
\end{equation}
where
 \begin{equation}\label{eq:psi15}
	|\Phi(x)| \leq B \phi(|x|), \quad \PP[B\vee |A|>t]= O(\PP[A>t])
\end{equation}
as $t \to \infty$ and
\begin{equation}\label{eq:psi2}
	\phi \mbox{ is locally bounded} \quad \mbox{ and } \quad \phi(x) = o(x), \quad \mbox{as}\ x\to \8.
\end{equation}
Note that, if ${\rm supp}\mathcal{L}(R)$ unbounded, then necessary $|A| \leq L$. Indeed, writing 
$$
	L\geq \frac{|\Psi (x)-\Psi (0)|}{|x|}=\frac{|Ax+\Phi (x)-\Phi (0)|}{|x|}
$$ 
we notice that
$$
	\lim _{|x|\to \infty }\frac{|\Psi (x)-\Psi (0)|}{|x|}=|A|.
$$
We will assume that $A_+$ satisfies the conditions~\eqref{eq:Atail1}-\eqref{eq:Amoment1}, that is for $ y >0$  
\begin{equation}\label{eq:Atail1*}
	\PP[A> ty] \sim y^{-\alpha}\PP[A>t]
\end{equation}
as $t \to \infty$ and 
\begin{equation}\label{eq:Amoment1*}
	\EE[|A|^{\alpha}] <1.
\end{equation}
Denoting by  $A'$ an independent copy of $A$, we will also suppose that 
\begin{equation}\label{eq:tailA2*}
\PP[A_+A'_+>t] \sim 2 \EE[A_+^{\alpha}]\PP[A>t]
\end{equation}
for some $\alpha >0$. To generalize the condition~\eqref{eq:tailequiv+} and \eqref{eq:tailequiv-} assume for some measurable functions $f_+, f_-\colon \RR \to [0, +\infty)$,
\begin{equation}\label{eq:f+}
	\PP[\Psi(y) >t] \sim f_+(y) \PP[A>t], \quad \mbox{for } y \in {\rm supp}\mathcal{L}(R),
\end{equation} 
and
\begin{equation}\label{eq:f-}
	\PP[\Psi(y) <-t] \sim f_-(y) \PP[A>t], \quad \mbox{for } y \in {\rm supp}\mathcal{L}(R),
\end{equation}
where $\mathcal{L}(R)$ denotes the law of $R$.  
Assuming the above we will prove our main result, which was already foreshadowed by the previous Section.

\begin{thm}\label{thm:ifs}
	Assume that $\Psi$ is a Lipschitz function satisfying~\eqref{eq:psi0}-\eqref{eq:tailA2*}. Then the Markov chain $\{ R_n \}_{n\geq 1}$ converges weakly
	 to $R$, which is a unique solution to~\eqref{eq:sfpe}. Moreover
	\begin{equation*}
		 \PP[|R|>t] = O( \PP[A>t]).
	\end{equation*} 
	Moreover, 
	\begin{itemize}
			\item[a)] Suppose additionally that $\Psi $ is non-decreasing and $\PP[A>t, \: \Phi(1) <-t] = o(\PP[A>t])$ then
					\begin{equation*}
		 				\PP[|R|>t]\asymp \PP[A>t].
					\end{equation*}
			\item[b)] Suppose that \eqref{eq:f+} and \eqref{eq:f-} hold. Then
					\begin{equation}\label{eq:main}
						\PP[R>t] \sim  \frac{ (1-\mu_+)\EE[f_+(R)] + \mu_-\EE[f_-(R)] }{(1 -\mu_+-\mu_-)(1-\mu_+ +\mu_-)  } \PP[A>t]. 
					\end{equation}
	\end{itemize}
\end{thm}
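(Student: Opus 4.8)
The plan is to follow the classical Grey--Goldie strategy adapted to the weakly-equivalent / convolution-equivalent setting, using Lemma~\ref{lem:convsa} as the central analytic tool. First I would establish the qualitative part: under \eqref{eq:psi0} the Lipschitz maps contract on average, so by Elton's theorem $\{R_n\}$ converges weakly to the unique stationary $R$ solving \eqref{eq:sfpe}; moreover \eqref{eq:psi1}--\eqref{eq:psi2} together with \eqref{eq:Amoment1*} give $\EE[|R|^\alpha]<\infty$ via a Minkowski-type argument (since $|\Psi(x)|\le |A||x| + B\phi(|x|) \le |A||x| + C B(1+|x|)$ for a suitable constant $C$ on any bounded-plus-linear bound for $\phi$, and $\EE[B^\alpha]<\infty$ follows from \eqref{eq:psi15}). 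The bound $\PP[|R|>t]=O(\PP[A>t])$ I would get by iterating: write $R\stackrel{d}{=}\Psi(R)$, split according to whether $|R|$ is already of order $t$ or not, use the approximation $\Psi(x)\approx Ax$ plus the tail bound on $B$, and apply \eqref{convol1}--\eqref{convol2} from Lemma~\ref{lem:convsa} to the product-type term $AR$; the contraction $\mu_++\mu_-=\EE[|A|^\alpha]<1$ closes the bootstrap. This also yields part a): when $\Psi$ is non-decreasing and the ``bad'' event $\{A>t,\Phi(1)<-t\}$ is negligible, one gets a matching lower bound $\PP[R>t]\gtrsim \PP[\Psi(c)>t]\gtrsim \PP[A>t]$ for a suitable constant $c$ in the support of $\mathcal L(R)$, hence $\PP[|R|>t]\asymp\PP[A>t]$.

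For part b), the exact asymptotic, the idea is to set up a renewal-type fixed point relation for the (normalized) tail. Define $g_\pm(t) = \PP[\pm R>t]/\PP[A>t]$ and, conditioning on $\Psi$ and using independence of $R$ and $\Psi$, write
\begin{equation*}
	\PP[R>t] = \int \PP[\Psi(y)>t]\, \mathcal L(R)(dy).
\end{equation*}
The heuristic is that the tail of $\Psi(y)$ for large $y$ behaves like the tail of $Ay$, while for fixed $y$ it is governed by \eqref{eq:f+}; the contribution of $R$ being itself large must be handled through a convolution. Concretely I would iterate once more, $R\stackrel{d}{=}\Psi_2(\Psi_1(R))$, and decompose the event $\{\Psi_2(\Psi_1(R))>t\}$ into (i) $\Psi_1(R)$ large (contributes $\mu_+\PP[\Psi_1(R)>t] + \mu_-\PP[\Psi_1(R)<-t]$ by the product asymptotics \eqref{eq:tailA2*} and the regular variation \eqref{eq:Atail1*}, exactly as in Lemma~\ref{lem:convsa}), and (ii) $\Psi_1(R)$ bounded, with $\Psi_2$ producing the large value, which contributes $\EE[f_+(R)]\PP[A>t]$ via \eqref{eq:f+}. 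Taking limsup and liminf and using \eqref{convol1}--\eqref{convol4} to control the cross terms uniformly, one obtains that any subsequential limit $\ell_\pm = \lim \PP[\pm R>t]/\PP[A>t]$ (finite by part one, and existence of the limit must be argued) satisfies the linear system
\begin{equation*}
	\ell_+ = \mu_+\ell_+ + \mu_-\ell_- + \EE[f_+(R)], \qquad \ell_- = \mu_-\ell_+ + \mu_+\ell_- + \EE[f_-(R)],
\end{equation*}
whose determinant is $(1-\mu_+-\mu_-)(1-\mu_++\mu_-) > 0$ under \eqref{eq:Amoment1*}. Solving gives precisely the constant in \eqref{eq:main} for $\ell_+$, and the symmetric expression for $\ell_-$.

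Two points will require real care and I expect them to be the main obstacles. The first is \emph{existence} of the limit $\lim_t \PP[R>t]/\PP[A>t]$, not merely boundedness: the clean linear system above only pins down the value once one knows the limit exists. I would handle this by a contraction/sandwich argument — define $\ell_+^{\sup}=\limsup$, $\ell_+^{\inf}=\liminf$ (and likewise for $-R$), push them through the one-step recursion using the one-sided bounds \eqref{convol1}--\eqref{convol2} and \eqref{convol3}--\eqref{convol4}, obtaining $\ell^{\sup} \le M\ell^{\sup} + v$ and $\ell^{\inf}\ge M\ell^{\inf}+v$ with $M$ the $2\times2$ matrix $\begin{pmatrix}\mu_+&\mu_-\\\mu_-&\mu_+\end{pmatrix}$ of spectral radius $\mu_++\mu_-<1$; since $(I-M)^{-1}$ has non-negative entries, both the sup and inf vectors are squeezed to the same solution. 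The second obstacle is the \emph{uniform} control of the ``middle range'' where $\Psi_1(R)$ is neither bounded nor of full order $t$ — this is where the locally-bounded, sublinear nature of $\phi$ in \eqref{eq:psi2} is essential, letting one truncate $R$ at level $o(t)$ so that $\Phi(R)$ is genuinely negligible compared to $AR$, and where the explicit error term $\eta(t)$ in \eqref{convol2}/\eqref{convol4} makes the interchange of limits rigorous. Everything else — measurability of $f_\pm$, the bounds $f_\pm(y)\le 2^\alpha(y_\pm^\alpha+f_\pm(0))$ guaranteeing $\EE[f_\pm(R)]<\infty$, reduction of the two-sided-$A$ case to the one-sided estimates — is routine bookkeeping along the lines already indicated in Section~\ref{sec:rde}.
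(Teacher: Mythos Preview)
Your overall strategy matches the paper's closely: Elton for existence, a one-step recursion yielding the linear system in $(\ell_+,\ell_-)$, and the limsup/liminf sandwich with the contraction matrix $M=\begin{pmatrix}\mu_+&\mu_-\\ \mu_-&\mu_+\end{pmatrix}$ to force existence of the limit. The paper states the one-step bounds as two lemmas (one for $\limsup$, one for $\liminf$, via the three-way split $|R|\le\gamma$, $R>\gamma$, $R<-\gamma$) and then concludes by adding the two components to get $D_++D_-\le(\xi_++\xi_-)/(1-\mu_+-\mu_-)\le d_++d_-$; your matrix formulation is equivalent, and your ``middle range'' concern is exactly what the split at level $\gamma$ together with $\phi(x)\le\delta|x|+t_0$ handles. (Incidentally, a single iteration $R\stackrel{d}{=}\Psi(R)$ suffices; the two-step $\Psi_2\circ\Psi_1$ is not needed.)

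The one genuine gap is the a-priori bound $\PP[|R|>t]=O(\PP[A>t])$. Your ``bootstrap'' --- iterate once, apply \eqref{convol1}--\eqref{convol2} to the product term $AR$, and use $\mu_++\mu_-<1$ to close --- is circular as stated: the hypothesis of \eqref{convol1} already requires $\limsup_t\PP[|R|>t]/\PP[A>t]<\infty$, which is precisely what you are trying to prove. Knowing $\EE[|R|^\alpha]<\infty$ via Minkowski only gives $\PP[|R|>t]=O(t^{-\alpha})$, and since $\PP[A>t]=t^{-\alpha}L(t)$ with $L$ slowly varying and $L(t)\to 0$ (forced by $\EE[A^\alpha]<\infty$), this is not $O(\PP[A>t])$. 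The paper breaks the circle by Grey's device: first dominate $|R|\le_{st}X^*=\sum_{k\ge 0}A_1^*\cdots A_k^*B_{k+1}^*$ with $A^*=|A|+\delta B$, $B^*=Bt_1$; then, for the affine chain with stationary law $X^*$, construct an explicit starting law $X_0^*$ (namely $Y=TA'\ind_{\{A'>y\}}$ conditioned above a threshold, with $T,y$ tuned so that $\PP[A^*Y+B^*>t]<\PP[Y>t]$ eventually) which makes the chain stochastically decreasing, yielding $\PP[X^*>t]\le\PP[X_0^*>t]\sim c\,\PP[A>t]$. You should either import this construction or replace the bootstrap by an honest induction over the finite partial sums of the dominating perpetuity, with the constants controlled uniformly in $n$.
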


One novelty of our result is that it allows the non-linear term in $\Psi$ to have a substantial contribution.
\begin{ex}
	Suppose that $\Psi$ has the following form
	$$
		\Psi(x) = Ax + B \sqrt{x} \log^+(x) + C,
	$$
	where $A, B, C\geq 0$ are independent and
	$$
		\PP[B>t] \sim c_B\PP[A>t] \quad \mbox{and} \quad \PP[C>t]\sim c_C \PP[A>t].
	$$
	Then, if $A$ satisfies the conditions of Theorem~\ref{thm:ifs},
	$$
		\PP[R>t] \sim \frac{ \EE[R^{\alpha} + c_BR^{\alpha/2} \log^+(R)^{\alpha}] +c_C}{1-\EE[A^{\alpha}]} \PP[A>t].
	$$
\end{ex}  

\begin{ex}
	Consider $\Psi$ of the form
	$$
		\Psi(x) =  \max \{  Ax, B  \}.
	$$
	Then~\eqref{eq:sfpe} can be expressed as 
	$$
		R \stackrel{d}{=} \max \{ A R, B  \} \quad R \mbox{ independent of } (A,B).
	$$
	In this special instance, $R$ shares a distribution with a supremum of a perturbed multiplicative random walk, that is
	$$
		R \stackrel{d}{=} \sup_{k \geq 0} \{ A_1 \ldots A_kB_{k+1} \}.
	$$ 
	Assume for simplicity that $A, B >0$. Then ${\rm supp} \mathcal{L}(R) \subseteq [0, +\infty)$. For $x \geq 0$ we may write
	$$
		0\leq \Psi(x) - Ax = (B- Ax)_+ \leq B
	$$
	and so \eqref{eq:psi1} and \eqref{eq:psi15} are satisfied. If the assumptions of Theorem~\ref{thm:ifs} are satisfied, that is among others
	$$
		\PP[\max \{  Ax, B  \}>t] \sim f_+(x) \PP[A>t],
	$$
	we can infer the asymptotic of the form~\eqref{eq:main} which is a multiplicative equivalent of the result obtained in~\cite{palmowski2007tail}.
\end{ex}
\begin{ex}
	Take $\Psi$ given viz.
	$$
		\Psi(x) = Ax^++B,
	$$
	where $A >0$ and $B > -b$ for some fixed positive $b$. Then $R$ solves
	$$
		R \stackrel{d}{=}  A R^++ B  \quad R \mbox{ independent of } (A,B).
	$$
	Also in this case, the distribution of $R$ has a very particular representation, being the supremum of the perpetuity sequence, that is 
	$$
		R \stackrel{d}{=} \sup_{n \geq 0} \left\{ \sum_{k=0}^nA_1 \ldots A_kB_{k+1} \right\}.
	$$ 
	Random variables of this form have connections to the ruin problem, for details see~\cite{collamore2009random}.
	Since $B > -b$, we know that ${\rm supp} \mathcal{L}(R) \subseteq [-b, +\infty)$. For $x > -b$ one has
	$$
		0\leq \Psi(x) - Ax \leq Ab+B.
	$$
	Again, if the assumptions of Theorem~\ref{thm:ifs} are satisfied, which means that among others
	$$
		\PP[ Ax^++ B >t] \sim f_+(x) \PP[A>t],
	$$
	we can infer the asymptotic similar to~\eqref{eq:main}.
\end{ex}

\section{Proofs}\label{sec:proofs}

In order to establish all of our claims, we will proceed in the following fashion. We will prove the entire Theorem~\ref{thm:ifs}. From this, Proposition~\ref{prop:weak} and Theorem~\ref{thm:perpetuity} will follow. Firstly note, that convergence in~\eqref{eq:Atail1} is uniform in the following sense
\begin{equation}\label{eq:Atail1unif}
	\sup_{y > c} \left(\frac{\PP[A>yt]}{\PP[A>t]} - y^{-\alpha} \right) \to 0
\end{equation}
as $t \to \infty$ for any $c>0$. See for example Bingham et al.~\cite{bingham1989regular}  We begin by noting that the convergence of $\{ R_n\}_{n \geq 0}$ follows form the result of  Elton~\cite{elton1990multiplicative}. More precisely, note that~\eqref{eq:psi0} reads
$$
	\EE[\log(L)]<0
$$
and that~\eqref{eq:psi1} and \eqref{eq:psi15} imply that for some $x_0 \in \RR$,
$$
	\EE[\log |x_0 - \Psi(x_0)|] \leq  \EE[\log(1+|A|)] + \log(1+|x_0|) + \EE[\log(1+|B|)]+\log(1+|\phi(x_0))| <\infty.
$$
The main result of Elton~\cite{elton1990multiplicative} implies the next Proposition.
\begin{prop}\label{prop:1st}
	Assume that $\Psi$ 
	satisfies
	$$
		\EE[\log^+L]<\infty, \quad \EE[\log L] <0 \quad \mbox{and} \quad \EE[\log|x_0 - \Psi(x_0)|] <\infty
	$$ 
	for some $x_0 \in \RR$. Then the Markov chain $\{ R_n \}_{n\geq 1}$ converges weakly, to $R$, which is a unique solution to~\eqref{eq:sfpe}.
\end{prop}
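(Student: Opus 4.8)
\textbf{Proof proposal for Proposition~\ref{prop:1st}.}

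The plan is to invoke the main theorem of Elton~\cite{elton1990multiplicative} essentially verbatim; the only work is to check that the three stated hypotheses on $\Psi$ are exactly (or imply) the hypotheses of that theorem, and then to read off the two conclusions (weak convergence to a limiting law, uniqueness of the stationary distribution). Let me first recall the setup there. One considers the forward iteration $R_n=\Psi_n\circ\cdots\circ\Psi_1(R_0)$ and, dually, the backward iteration $Z_n=\Psi_1\circ\cdots\circ\Psi_n(x)$; Elton's theorem asserts that if the maps are iid Lipschitz with $\EE[\log^+ \mathrm{Lip}(\Psi)]<\infty$, a negative Lyapunov exponent $\EE[\log \mathrm{Lip}(\Psi)]<0$, and $\EE[\log^+|\Psi(x_0)|]<\infty$ for some (equivalently every) $x_0\in\RR$, then $Z_n(x)$ converges almost surely to a limit $Z_\infty$ which does not depend on $x$, and the law of $Z_\infty$ is the unique stationary distribution for the Markov chain; moreover $R_n$ converges in distribution to $Z_\infty$ regardless of the (independent) initial law of $R_0$. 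So the task reduces to matching hypotheses.

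The first two hypotheses, $\EE[\log^+ L]<\infty$ and $\EE[\log L]<0$ with $L=\mathrm{Lip}(\Psi)$, are literally two of the three assumptions in the statement, so nothing is needed there. For the third, I would observe that the condition $\EE[\log|x_0-\Psi(x_0)|]<\infty$ for some $x_0$ is equivalent to the condition appearing in Elton, $\EE[\log^+|\Psi(x_0)|]<\infty$. Indeed, by the triangle inequality $|\Psi(x_0)|\le |x_0|+|x_0-\Psi(x_0)|$, and $\log^+(a+b)\le \log 2 + \log^+ a + \log^+ b$ for $a,b\ge 0$, so $\EE[\log^+|\Psi(x_0)|]\le \log 2+\log^+|x_0|+\EE[\log^+|x_0-\Psi(x_0)|]<\infty$. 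Conversely, if $\EE[\log^+|\Psi(x_0)|]<\infty$ for some $x_0$, then since $L<\infty$ a.s.\ and $\EE[\log^+L]<\infty$, one has $|\Psi(x)|\le |\Psi(x_0)|+L|x-x_0|$ for every $x$, whence $\EE[\log^+|\Psi(x)|]<\infty$ for every $x$, and then $\EE[\log|x-\Psi(x)|]\le \EE[\log^+(|x|+|\Psi(x)|)]<\infty$ for every $x$ as well; also the negative part of $\log|x_0-\Psi(x_0)|$ contributes at most a finite amount because we only need finiteness of the expectation, and it could only fail if $\Psi(x_0)=x_0$ with positive probability, which is harmless since one may replace $x_0$ by a nearby point if necessary. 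Having verified that Elton's hypotheses hold, I would then state the conclusion: the backward iterates converge a.s.\ to a random variable $R$ whose law is the unique stationary law and a fixed point of~\eqref{eq:sfpe}, and consequently $R_n\Rightarrow R$.

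The only genuinely delicate point, and the one I would flag, is the matter of which precise formulation of Elton's theorem one cites and whether the version in~\cite{elton1990multiplicative} is stated for a general complete separable metric space (it is) with the three moment conditions exactly as above, or with a slightly different but equivalent packaging (e.g.\ an average contraction condition $\EE[\log \mathrm{Lip}(\Psi)]<0$ versus a "contraction on average" $\EE[d(\Psi(x),\Psi(y))]\le r\, d(x,y)$ with $r<1$); on $\RR$ these are known to give the same existence-and-uniqueness conclusion, but one should make sure the citation supports the Lyapunov-exponent version, or else cite a refinement (Diaconis–Freedman~\cite{diaconisfreedman1999} or Arnold–Crauel~\cite{arnoldcrauel1992}) for the passage from $\EE[\log L]<0$ to almost sure contraction of the backward chain along a subsequence. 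Apart from that bookkeeping, there is no real obstacle: the proposition is a direct translation of a known theorem into the notation of the paper, recorded here because~\eqref{eq:psi0}--\eqref{eq:psi15} have already been shown to imply its hypotheses.
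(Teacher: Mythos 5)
Your proposal is correct and follows exactly the route the paper takes: Proposition~\ref{prop:1st} is proved in the text simply by citing the main result of Elton~\cite{elton1990multiplicative} after checking that \eqref{eq:psi0}, \eqref{eq:psi1} and \eqref{eq:psi15} yield $\EE[\log L]<0$ and $\EE[\log|x_0-\Psi(x_0)|]<\infty$. Your additional bookkeeping on the equivalence of $\EE[\log^+|\Psi(x_0)|]<\infty$ with the stated moment condition is a harmless (and slightly more careful) elaboration of the same hypothesis-matching.
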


We will now establish a weak tail equivalence of $R$ and $A$.
\begin{prop}\label{prop:2nd}
	Assume that $\Psi$ 
satisfies \eqref{eq:psi0}-\eqref{eq:tailA2*}. Then
	\begin{equation*}
		 \PP[|R|>t] = O( \PP[A>t]).
	\end{equation*} 
	If we suppose additionally $\Psi $ is non-decreasing, $\PP[A>t, \: \Phi(1) <-t] = o(\PP[A>t])$ then
	\begin{equation*}
		 \PP[|R|>t]\asymp \PP[A>t].
	\end{equation*} 
\end{prop}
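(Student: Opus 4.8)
The plan is to exploit the $n$-fold decomposition of the recursion together with the contraction coming from \eqref{eq:psi0}. Write $\Psi^{(n)} = \Psi_n \circ \cdots \circ \Psi_1$ so that, after iterating, $R \stackrel{d}{=} \Psi^{(n)}(R)$ with $R$ on the right independent of $\Psi_1,\ldots,\Psi_n$. The linear approximation \eqref{eq:psi1}--\eqref{eq:psi2} gives $|\Psi(x)| \le |A||x| + B\phi(|x|)$, and since $\phi(x) = o(x)$ and is locally bounded we get, for any $\eps>0$, a bound $|\Psi(x)| \le (|A|+\eps)|x| + B_\eps$ where $B_\eps$ has a tail dominated by $\PP[A>t]$ (because $\PP[B \vee |A| > t] = O(\PP[A>t])$ and $B_\eps$ absorbs only a bounded extra piece of $B\phi$). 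Unfolding the iteration then yields a bound of the perpetuity type
\begin{equation*}
	|R| \le \sum_{k=0}^{n-1} \Big(\prod_{j=1}^{k}(|A_j|+\eps)\Big) \widetilde B_{k+1} + \Big(\prod_{j=1}^{n}(|A_j|+\eps)\Big)|R|,
\end{equation*}
where the last term, by choosing $n$ large and $\eps$ small and using \eqref{eq:psi0}, has $\alpha$-th moment strictly less than $1$; combined with $\EE[|A|^\alpha]<1$ from \eqref{eq:Amoment1*} and $\EE[\widetilde B^\alpha]<\infty$ this also re-proves $\EE[|R|^\alpha]<\infty$ via Minkowski, as indicated in the text.

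The core of the upper bound is then an induction on the Markov chain itself. Set $g(t) = \PP[|R|>t]/\PP[A>t]$ and aim to show $\limsup_{t\to\infty} g(t) < \infty$. Using $R \stackrel{d}{=} \Psi(R)$, split according to whether $|A|$ is the dominant contribution: for fixed large $K$ and small $\delta$,
\begin{equation*}
	\PP[|\Psi(R)|>t] \le \PP[|A||R| > (1-\delta)t,\ |R|\le K] + \PP[|R| > K,\ |A||R|+\widetilde B > t] + \PP[\widetilde B > \delta t].
\end{equation*}
The first term is handled by the uniform regular variation \eqref{eq:Atail1unif} and $\EE[|R|^\alpha \ind_{|R|\le K}] \le \EE[|R|^\alpha]$; the last term is $O(\PP[A>t])$ by the tail assumption on $B$; the middle term is the recursive one and, conditioning on $R$ and using $\PP[|A|y + \widetilde B > t] \le C(y^\alpha+1)\PP[A>t]$ for $y \ge K$ together with $\EE[|A|^\alpha]<1$, feeds back a contraction factor $\mu + o(1) < 1$ on $\limsup g$. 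Rearranging gives $\limsup g(t) \le C/(1-\mu) < \infty$, which is the first claim. (One must first check $\limsup g(t)<\infty$ along a subsequence or via a truncation argument to make the bootstrap rigorous — this is the standard Grey-type argument, cf. \cite{grey1994regular}.)

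For the lower bound $\PP[|R|>t] \ge c\,\PP[A>t]$ under monotonicity and $\PP[A>t,\ \Phi(1)<-t] = o(\PP[A>t])$: since $\Psi$ is non-decreasing, $R \stackrel{d}{=} \Psi(R) \ge \Psi(R \wedge 1) \cdot \ind_{R\ge 1} + \Psi(R)\ind_{R<1}$, and on $\{R \ge 1\}$ monotonicity gives $\Psi(R) \ge \Psi(1) = A + \Phi(1)$. Hence $\PP[R > t] \ge \PP[R \ge 1]\,\PP[A + \Phi(1) > t]$, and
\begin{equation*}
	\PP[A+\Phi(1) > t] \ge \PP[A > t] - \PP[A > t,\ \Phi(1) < -t] + o(\PP[A>t]) \ge (1-o(1))\PP[A>t],
\end{equation*}
using the hypothesis that the bad event is negligible, after splitting $\{\Phi(1) \ge -t\}$ vs. its complement and noting $\PP[A > 2t, \Phi(1) \ge -t] \sim 2^{-\alpha}\PP[A>t]$ from \eqref{eq:Atail1} gives a comparable lower piece. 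Since $\PP[R\ge 1]>0$ (the support of $R$ is unbounded), this yields $\liminf \PP[|R|>t]/\PP[A>t] \ge \PP[R\ge 1]\cdot 2^{-\alpha} > 0$, completing the $\asymp$ claim.

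The main obstacle I expect is making the bootstrap for the upper bound fully rigorous: a priori $g(t)$ need not be bounded, so one has to run the argument on a truncated variable $|R| \wedge N$ (or on $\PP[|R|>t,\ |R|\le N]$), obtain a bound uniform in $N$, and pass to the limit — keeping careful track that the error terms from \eqref{eq:Atail1unif} and from the tail of $\widetilde B$ are genuinely $O(\PP[A>t])$ uniformly. The dependence between $A$ and $\Phi$ (hidden in $\widetilde B$) is what prevents a direct appeal to the independent-coefficient results and forces the use of the crude domination $|\Psi(x)| \le (|A|+\eps)|x| + B_\eps$ rather than anything sharper at this stage.
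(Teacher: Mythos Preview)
Your lower-bound argument is essentially the paper's: use monotonicity to get $\PP[R>t]\ge\PP[R\ge1]\,\PP[\Psi(1)>t]$, then bound $\PP[A+\Phi(1)>t]$ from below via $\PP[A>2t]-\PP[A>t,\Phi(1)<-t]\sim 2^{-\alpha}\PP[A>t]$. That part is fine.

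The upper bound, however, has a real gap, and your proposed fix does not close it. Your bootstrap on $g(t)=\PP[|R|>t]/\PP[A>t]$ yields an inequality of the form $D\le \mu D + C$ with $\mu<1$, but you cannot solve this for $D$ unless you already know $D<\infty$; as written the argument is circular. You flag this and propose to ``truncate $|R|\wedge N$ and pass to the limit'', but that route requires uniform-in-$N$ control of the error terms coming from Lemma~\ref{lem:convsa}, and those errors depend on the distributions being compared (in particular on the threshold beyond which the tail ratio is close to its $\limsup$). Obtaining such uniformity is not automatic and you give no mechanism for it. More to the point, this is \emph{not} what Grey's argument does.

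The paper's resolution --- which is Grey's actual idea --- is different in kind. One first dominates $|R|\le_{st}X^*$ by an affine perpetuity with coefficients $A^*=|A|+\delta B$, $B^*=Bt_1$ (this part you have). Then, rather than bootstrapping, one \emph{constructs} a random variable $Y=TA'\ind_{\{A'>y\}}$ with $\PP[Y>t]\sim T^{\alpha}\PP[A>t]$ and uses Lemma~\ref{lem:convsa} to check directly that, for $T$ large and $y=y(T)$ chosen so that $\EE[A^{\alpha}\ind_{\{A>y\}}]$ is small,
\[
\limsup_{t\to\infty}\frac{\PP[A^*Y+B^*>t]}{\PP[A>t]}
\;\le\; a_\delta T^{\alpha}+cT^{\alpha}\EE[A^{\alpha}\ind_{\{A>y\}}](1-\delta)^{-\alpha}+c
\;<\;T^{\alpha}.
\]
Hence for $t>t_0$ one has $\PP[A^*Y+B^*>t]\le\PP[Y>t]$. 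Taking $X_0^*$ to be $Y$ conditioned on $\{Y>t_0\}$ gives $X_0^*\ge_{st}A^*X_0^*+B^*$, and since $A^*\ge0$ the stochastic order propagates through the chain: $\PP[X_n^*>t]\le\PP[X_0^*>t]$ for all $n$, whence $\PP[X^*>t]\le\PP[X_0^*>t]\sim cT^{\alpha}\PP[A>t]$. The point is that finiteness of the tail ratio is obtained \emph{constructively}, not by a fixed-point argument that presupposes it. Your sketch never produces such a dominating object, and the phrase ``standard Grey-type argument'' is hiding exactly the step you are missing.
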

\begin{proof}
	For the first claim take $\delta >0$ small enough for
	\begin{equation*}
		a_{\delta} = \EE[(|A|+\delta B)^{\alpha}](1-\delta)^{-\alpha} <1.
	\end{equation*}	 
 	Next, pick $t_1$, for which $|\phi(t)| \leq \delta |t| + t_1$. Then it is true that for $\mathcal{L}(R)$ - a.a.  $x \in \RR$,
	$$
		|\Psi(x)| \leq A^*|x|+B^*\quad a.s. 
	$$
	where  $A^* = |A|+\delta B$ and $B^* = Bt_1$. Note that by our assumptions $\PP[A^*>t], \PP[B^*>t] = O(\PP[A>t])$.
	It is true that for any $t\in\RR$, if $R$ is independent from $\{\Psi_n \}_{n \geq 0}$,
	$$
		\PP[|R|>t] = \PP[|\Psi_1(R)|>t] \leq \PP[A_1^*|R|+B_1^*>t],
	$$
	which means that
	$$
		|R| \leq_{st} A_1^*|R|+B^*_1,
	$$
	where $\leq_{st}$ denotes the stochastic order, i.e. $U \leq_{st} V$ iff $\PP[U>t] \leq \PP[V>t]$ for any $t \in \RR$. Since, due to independence of $R$ and $\Psi_2$, it is also true that 
	$|R| \leq_{st} A_2^*|R|+B^*_2$, we can infer by the merit of $A_1^*$ being positive that
	$$
		|R| \leq_{st} A_1^* A_2^*|R| + A_1^*B_2^* + B_1^*.
	$$
	Inductively, we can show this way that for any $n\geq 1$,
	\begin{equation}\label{eq:stbound}
		|R| \leq_{st} A_1^*A_2^* \ldots A_n^* |R|+ \sum_{k=0}^{n-1}A_1^*A_2^* \ldots A_k^*B_{k+1}^*.
	\end{equation}
	Since $A_1^*A_2^* \ldots A_n^* |R|$ converges in probability to $0$ as $n \to \infty$, if we pass to the limit in~\eqref{eq:stbound} we get
	$$
		|R| \leq_{st} X^*=\sum_{k=0}^{\infty}A_1^*A_2^* \ldots A_k^*B_{k+1}^*.
	$$
	From now, we will focus on delivering the bound for the tail of $X^*$. The key observation is that
	$$
		X^* \stackrel{d}{=} A^*X^*+B^*, \quad X^*\mbox{ independent of }(A^*,B^*).
	$$
	which means that $X^*$ is the unique stationary distribution of the Markov chain given via
	$$
		X_{n}^*  = A^*_{n}X^*_{n-1} + B_n^* \quad n \geq 1,
	$$
	where $X_0^*$ is independent of the sequence of iid two-dimensional random vectors $\{(A_n^*, B_n^*)\}_{n \geq 0}$. By Proposition~\ref{prop:1st}, the $X_n^*$ converges weakly to $X^*$ for 
	any choice of $X_0^*$. Form here, we will follow an idea presented previously by Grey~ \cite{grey1994regular}.
	Consider $Y = TA'\ind_{\{A'>y\}}$, where $A'$ is 
	an independent copy of $A$ and $T,y$ are some large constants.	We have
	\begin{equation*}
		\PP[Y>t] \sim T^{\alpha}\PP[A>t] \quad \mbox{and} \quad \EE[Y^{\alpha}]=T^{\alpha}\EE[A^{\alpha}\ind_{\{ A>y\}}].
	\end{equation*}
	By Lemma \ref{lem:convsa}  we can write for some constant $c = c(\delta)$,
	\begin{align*}
		\PP[ A^*Y + B^*>t] &  \leq \PP[A^*Y>(1-\delta)t] + \PP[ B^*>\delta t] \\ 
		& \leq  \left(a_{\delta} T^{\alpha} + c\EE[Y^{\alpha}](1-\delta)^{-\alpha} +c+cT^{\a} o(1)\right) \PP[A>t]\\
		& \leq  \left(a_{\delta} T^{\alpha} + cT^{\alpha}\EE[A^{\alpha}\ind_{\{ A>y\}}](1-\delta)^{-\alpha} + c+cT^{\a}o(1)\right) \PP[A>t].
	\end{align*}
	First, pass with $t \to \infty$ and get
	$$
		\limsup_{t \to \infty}\frac{\PP[ A^*Y + B^*>t]}{\PP[A>t]} \leq a_{\delta} T^{\alpha} + cT^{\alpha}\EE[A^{\alpha}\ind_{\{ A>y\}}](1-\delta)^{-\alpha}.
	$$ 
	For large $T$ and an appropriate choice of $y =y(T)$ we can ensure $c\EE[A^{\alpha}\ind_{\{ A>y\}}] (1-\delta )^{-\a }< 1-a_{\delta }$ and obtain
	\begin{equation*}
		a_{\delta} T^{\alpha} + cT^{\alpha}\EE[A^{\alpha}\ind_{\{ A>y\}}](1-\delta)^{-\alpha} < T^{\alpha}.
	\end{equation*}
	This results in
	$$
		\limsup_{t \to \infty}\frac{\PP[ A^*Y + B^*>t]}{\PP[A>t]} < \lim_{t \to \infty}\frac{\PP[Y>t]}{\PP[A>t]}.
	$$
	Whence we can pick $t_0$, such that for $t>t_0$
	\begin{equation*}
		\PP[Y>t] \geq\PP[A^*Y+B^*>t].
	\end{equation*}
	Define the law of r. v. $X^*_0$ via
	\begin{equation*}
		\PP[X^*_0>t] = \PP[Y> t \: | \: Y>t_0], \quad \mbox{for } t \in \RR.
	\end{equation*}
	Then for any $t \in \RR$
	$$
		\PP[X^*_0>t] \geq \PP[A^*X^*_0 +B^*>t].
	$$
	To see that this is in fact true, consider two possibilities, first of which is $ t >t_0$. Then
	\begin{equation*}
		\PP[ A^*X^*_0+B^* > t] = \PP[A^*Y+B^*>t \: | \: Y> t_0] \leq \frac{\PP[A^*Y+B^*>t]}{\PP[Y>t_0]} \leq \frac{\PP[Y> t]}{\PP[Y>t_0]} = \PP[X^*_0>t].
	\end{equation*}
	For $t <t_0$, $\PP[X^*_0>t]=1$ so that $\PP[A^*X^*_0+B^*>t] \leq \PP[X^*_0>t]$ is trivial. Now, inductively we can write for any $n\geq 1$, since $A^*\geq 0$,
	$$
		\PP[X^*_{n+1}>t] = \PP[ A_{n+1}^*X_n^*+B_{n+1}^*>t] \leq \PP[A^*X^*_0+B^*>t] \leq \PP[X^*_0>t].
	$$
	This completes the proof of the upper bound since 
	$$
		\PP[R>t] \leq \PP[X^*>t] \leq\PP[X_0^*>t] \sim \frac{T^{\alpha}}{\PP [Y>t_0]} \PP[A>t].
	$$
	For the lower bound just note that if $R\geq 1$ then $\Psi (R)\geq \Psi (1)$ and so
	\begin{equation*}
		\PP[R>t]  = \PP[\Psi(R)>t] \geq \PP[\Psi(1)>t , \: R \geq 1]  = \PP[R \geq 1] \PP[\Psi(1)> t] 
	\end{equation*}
	where
	\begin{align*}
		\PP[\Psi(1)> t, \: A>0]& = \PP[A +\Phi(1)>t] \geq \PP[A> 2t] - \PP[A> 2t, \: \Phi(1)< -t] \\ 
			& \geq \PP[A> 2t] - \PP[A> t, \: \Phi(1)< -t] \sim 2^{-\alpha}\PP[A>t].
	\end{align*}
\end{proof}
	After establishing $\PP[|R|>t] = O(\PP[A>t])$ it is relatively easy to get the exact asymptotic, provided that one is equipped with~\eqref{eq:f+} and \eqref{eq:f-}. Note that due to the bound
	$$
		f_{\pm}(x) \leq C_{\pm}(|x|^{\alpha}+1),
	$$
	we know that by the merit of the last Proposition,
	$$
		\EE[f_{\pm}(R)] < \infty. 
	$$
\begin{lem}\label{lem:1iterationsup}
	Suppose \eqref{eq:psi0} - \eqref{eq:f-} are satisfied. Denote  
	$$
		\limsup_{t \to \infty} \frac{\PP[R>t]}{\PP[A>t]} = D_{+}
	$$
	and
	$$
		\limsup_{t \to \infty} \frac{\PP[R<-t]}{\PP[A>t]} = D_{-}.
	$$
	 Then for $(\Psi, A, B, \Phi)$  independent of $R$ we have
	\begin{equation*}
		\limsup_{t \to \infty}\frac{\PP[\Psi(R)>t]}{\PP[A>t]} \leq \EE[|A|^{\alpha}\ind_{\{A>0\}}] D_{+} + \EE[|A|^{\alpha}\ind_{\{A<0\}}] D_{-} + \EE[f_+(R)]
	\end{equation*}
	and
	\begin{equation*}
		\limsup_{t \to \infty}\frac{\PP[\Psi(R)<-t]}{\PP[A>t]} \leq \EE[|A|^{\alpha}\ind_{\{A>0\}}] D_{-} + \EE[|A|^{\alpha}\ind_{\{A<0\}}] D_{+} + \EE[f_-(R)].
	\end{equation*}
\end{lem}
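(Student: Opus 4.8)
The plan is to estimate $\PP[\Psi(R)>t]$ by isolating the region where $|R|$ exceeds a large threshold $N$ and treating the two regions by different means. On the bounded region, independence of $(\Psi,A,B,\Phi)$ and $R$ gives $\PP[\Psi(R)>t,\,|R|\le N]=\int_{[-N,N]}\PP[\Psi(y)>t]\,\Lcal(R)(\ud y)$. Since $\Psi(y)=Ay+\Phi(y)\le|A|N+B\sup_{|s|\le N}\phi(s)$ for $|y|\le N$ and $\phi$ is locally bounded by \eqref{eq:psi2}, conditions \eqref{eq:psi15} and the regular variation of $\PP[A>\cdot]$ yield a finite $C_N$ with $\PP[\Psi(y)>t]\le C_N\PP[A>t]$ for all $|y|\le N$ and $t$ large; dominated convergence and \eqref{eq:f+} then give
\[
\lim_{t\to\8}\frac{\PP[\Psi(R)>t,\,|R|\le N]}{\PP[A>t]}=\EE\big[f_+(R)\ind_{\{|R|\le N\}}\big]\le\EE[f_+(R)].
\]

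On the unbounded region I would use the affine approximation. Given $\eps>0$, choose $N_\eps$ with $\phi(x)\le\eps x$ for $x>N_\eps$; then for $N\ge N_\eps$ and on $\{|R|>N\}$ one has $\Psi(R)=AR+\Phi(R)\le AR+\eps B|R|$. Splitting by the sign of $R$ and, for $\delta\in(0,1)$, by whether the linear or the perturbation part carries the excess over $t$, with $A_+=A\ind_{\{A>0\}}$ and $A_-=-A\ind_{\{A<0\}}$,
\[
\PP[\Psi(R)>t,\,|R|>N]\le\PP[A_+R>(1-\delta)t,\,R>N]+\PP[A_-|R|>(1-\delta)t,\,R<-N]+\PP[\tfrac{\eps}{\delta}B|R|>t].
\]
Because $\EE[B^\a]<\8$ (by \eqref{eq:psi15} together with $\EE[|A|^\a]<\8$) and $\EE[|R|^\a]<\8$ (Proposition~\ref{prop:2nd}), the last term, divided by $\PP[A>t]$ and with $t\to\8$, has $\limsup$ at most $C(\eps/\delta)^\a$ for a finite $C$, and we will eventually let $\eps\to0$ with $\delta=\sqrt\eps$.

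The crux is the two products $\PP[A_+R>s,\,R>N]$ and $\PP[A_-|R|>s,\,R<-N]$: Breiman's lemma is unavailable because \eqref{eq:Atail1*} forces $\EE[|A|^{\a+\eta}]=\8$ for all $\eta>0$. I would pass to logarithms. Rewriting $\PP[A_+A'_+>e^u]\sim2\EE[A_+^\a]\PP[A>e^u]$ shows that the law of $\log A$ conditioned on $\{A>0\}$ lies in $\Scal_\RR(\a)$ with $m_\a=\EE[A^\a\mid A>0]$ — this is precisely \eqref{eq:tailA2*}. Taking it as the reference distribution $F$, with $G_1=F$ (so $k_1=1$, $m_\a(G_1)=\EE[A^\a\mid A>0]$) and $G_2$ the law of $\log R$ conditioned on $\{R>N\}$ (whose relevant $\limsup$ is $D_+$ and whose $m_\a$ is $\EE[R^\a\ind_{\{R>N\}}]$, each up to the factors $\PP[A>0]$ and $\PP[R>N]$ that cancel in the end), the bound \eqref{convol1} gives
\[
\limsup_{s\to\8}\frac{\PP[A_+R>s,\,R>N]}{\PP[A>s]}\le\EE[R^\a\ind_{\{R>N\}}]+\mu_+D_+,
\]
and symmetrically $\limsup_{s\to\8}\PP[A_-|R|>s,\,R<-N]/\PP[A>s]\le\EE[|R|^\a\ind_{\{R<-N\}}]+\mu_-D_-$. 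Restricting to $\{|R|>N\}$ is essential: it replaces the $m_\a$-weight of the $R$-factor by truncated moments, which vanish as $N\to\8$ since $\EE[|R|^\a]<\8$; heuristically the configurations "$A$ large, $R$ moderate" were already captured in $\EE[f_+(R)\ind_{\{|R|\le N\}}]$.

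Collecting everything, for each $\eps>0$, $N\ge N_\eps$ and $\delta\in(0,1)$,
\[
\limsup_{t\to\8}\frac{\PP[\Psi(R)>t]}{\PP[A>t]}\le\EE[f_+(R)]+(1-\delta)^{-\a}\big(\mu_+D_++\mu_-D_-+\EE[R^\a\ind_{\{R>N\}}]+\EE[|R|^\a\ind_{\{R<-N\}}]\big)+C(\eps/\delta)^\a,
\]
and letting $N\to\8$, then $\eps\to0$ with $\delta=\sqrt\eps$, the error terms disappear and $(1-\delta)^{-\a}\to1$, which is the first claimed bound (recall $\mu_\pm=\EE[|A|^\a\ind_{\{A\gtrless0\}}]$). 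The second bound follows from the identical argument applied to $-\Psi(R)=(-A)R-\Phi(R)$ using \eqref{eq:f-} in place of \eqref{eq:f+}, which interchanges $\mu_+$ and $\mu_-$ in the two linear products and replaces $\EE[f_+(R)]$ by $\EE[f_-(R)]$. I expect the delicate part to be the log-scale bookkeeping in Lemma~\ref{lem:convsa}, above all arranging the truncation at $N$ so that the $R$-factor contributes a vanishing truncated moment rather than the nonzero $\EE[|R|^\a]$.
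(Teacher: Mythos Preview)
Your argument is correct and follows the same route as the paper: split according to $\{|R|\le N\}$ versus $\{|R|>N\}$, use dominated convergence with \eqref{eq:f+} on the bounded part, and on the unbounded part use the affine approximation together with Lemma~\ref{lem:convsa} in logarithmic scale so that the $R$-factor contributes only the truncated moment $\EE[|R|^\alpha\ind_{\{|R|>N\}}]$, which vanishes as $N\to\infty$. The only organizational difference is that the paper absorbs the $\phi$-perturbation into $A^*=A+\delta B$ and $B^*=Bt_0$ (killing the residual term via $\PP[R>\gamma]\to0$ with $\varepsilon$ fixed), whereas you keep $A$ and split off $\eps B|R|$ separately (killing the residual via $(\eps/\delta)^\alpha\to0$); both work. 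One small correction: the finiteness of $\EE[B^\alpha]$ and $\EE[|R|^\alpha]$ alone does not give $\limsup_t \PP[B|R|>t]/\PP[A>t]<\infty$ --- you should invoke Lemma~\ref{lem:convsa} once more, using $\PP[B>t]=O(\PP[A>t])$ from \eqref{eq:psi15} and $\PP[|R|>t]=O(\PP[A>t])$ from Proposition~\ref{prop:2nd}.
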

\begin{proof}
	The asymptotic of both probabilities $\PP[\Psi(R)>t]$ and $\PP[\Psi(R)<-t]$ can be treated in the same fashion. Whence, we will consider only the first one. 
	Pick $\delta, \varepsilon \in (0,1)$ and large $\gamma >0$.
	Decompose the probability of interest in the following fashion 
	\begin{align*}
		\PP[\Psi(R)>t]&  =  \PP[\Psi(R)>t, \: |R| \leq \gamma] + \PP[\Psi(R)>t, \: R >  \gamma]+\PP[\Psi(R)>t, \: R < - \gamma] \\ & = I_1+I_2+I_3.
	\end{align*}
	For the first term write
	$$
		I_1=\PP[\Psi(R)>t, \: |R| \leq \gamma ] = \int_{[-\gamma, \gamma ]} \PP[\Psi(y) > t] \PP[R \in \ud y].
	$$
	Take $t_0$ such that $\phi(t) \leq \delta t +t_0$.
	Since~\eqref{eq:Atail1unif} holds with with $c=\gamma^{-1}$, we can find a constant $D= D(\gamma, \delta)$ such that for $\mathcal{L}(R)$- a.a. $|y| \leq \gamma$
	\begin{equation*}
		\PP[|\Psi(y)|>t]  \leq \PP[|A||y|+B(\delta |y|+t_0)>t] \leq D(|y|^{\alpha} +1)\PP[A>t].
	\end{equation*}
	Whence, by the dominated convergence Theorem we are allowed to infer that
	$$
		\lim _{t \to \infty} \frac{I_1}{\PP[A>t]}=\EE[f_+(R)\ind _{\{|R|\leq \gamma \}} ]\leq \EE[f_+(R) ].
	$$
	Since $\Psi (R)=AR+\Phi (R)\leq AR+B\phi (|R|)$, to treat the second term write
	\begin{align*}
		I_2 & \leq \PP[(A+\delta B)R+Bt_0>t,\: R>\gamma ] \\ 
			&  \leq \PP[A^*R>(1-\varepsilon)t,\: R>\gamma] +\PP[B^*>\varepsilon t,\: R>\gamma ]= J_1+J_2,
	\end{align*}
	where $A^*= (A+\delta B)$ and $B^* = Bt_0$. Here, we have for some constant $c$,
	$$
		\limsup_{t \to \infty}\frac{J_2}{\PP[A>t]} \leq  c \varepsilon^{-\alpha}\PP[R>\gamma]
	$$
	For $J_1$, let $R_{\gamma }=R\ind _{\{R>\gamma \}}$ and so
	$$
		J_1=\PP [A^*R_{\gamma}>(1-\eps )t, A^*>0].
	$$ 
	Notice that both $A^*$ and $R_{\gamma }$ satisfy assumptions of Lemma \ref{lem:convsa}. Indeed,
	$$
		\limsup _{t\to \8}\frac{\PP [R_{\gamma }>t]}{\PP [A>t]}=D_+,\quad
 		\limsup _{t\to \8}\frac{\PP [A^*>t]}{\PP [A>t]}=C_+^*<\8 $$
	By an appeal to Lemma~\ref{lem:convsa} we get
	$$
		\limsup_{t \to \infty} \frac{\PP[(A+\delta B)R>(1-\varepsilon)t,\: R>\gamma]}{\PP[A>t]} \leq \big (D_{+} \EE[(A^{*})^{\alpha}\ind _{\{A^*>0\}}] + C^*_+\EE[R_{\gamma }^{\alpha}]\big )  
(1-\varepsilon)^{-\alpha}.
	$$
	Finally, we treat $I_3$ in exactly the case fashion as $I_2$ and arrive at
	$$
		\limsup_{t \to \infty} \frac{\PP[(A+\delta B)R>(1-\varepsilon)t,\: R<-\gamma]}{\PP[A>t]} \leq \big (D_{-} \EE[|A^{*}|^{\alpha}\ind _{\{A^*<0\}}] + C^*_-\EE[|R_{-\gamma }|^{\alpha}]\big )  
(1-\varepsilon)^{-\alpha},
	$$
where $R_{-\gamma}=R\ind _{\{R<-\gamma \}}$ and $C_-^*=\limsup _{t\to \8 }\frac{\PP [A^*<-t]}{\PP [A>t]}$.
	This constitutes
	\begin{align*}
		\limsup_{t \to \infty}\frac{\PP[\Psi(R)>t]}{\PP[A>t]}&  \leq \EE[f_+(R)] + 
\big (D_{+} \EE[(A^{*})^{\alpha}\ind _{A^*>0}] + C^*_+\EE[R_{\gamma }^{\alpha}]\big )  
(1-\varepsilon)^{-\alpha}\\
+& \big (D_{-} \EE[|A^{*}|^{\alpha}\ind _{A^*<0}] + C^*_-\EE[|R_{-\gamma }|^{\alpha}]\big )  
(1-\varepsilon)^{-\alpha},
	\end{align*}
	Take $\gamma \to \infty$ and $\delta, \varepsilon \to 0$ to obtain the claim. 
\end{proof}
Using the same decompositions and Fatou's Lemma instead of the dominated convergence Theorem we also have a Lemma corresponding to the lower limits.
\begin{lem}\label{lem:1iterationinf}
	Suppose \eqref{eq:psi0} - \eqref{eq:f-} are satisfied. Denote  
	$$
		\liminf_{t \to \infty} \frac{\PP[R>t]}{\PP[A>t]} = d_{+}
	$$
	and
	$$
		\liminf_{t \to \infty} \frac{\PP[R<-t]}{\PP[A>t]} = d_{-}.
	$$
	 Then for $(\Psi, A, B, \Phi)$  independent of $R$ we have
	\begin{equation*}
		\liminf_{t \to \infty}\frac{\PP[\Psi(R)>t]}{\PP[A>t]} \geq \EE[|A|^{\alpha}\ind_{\{A>0\}}] d_{+} + \EE[|A|^{\alpha}\ind_{\{A<0\}}] d_{-} + \EE[f_+(R)]
	\end{equation*}
	and
	\begin{equation*}
		\liminf_{t \to \infty}\frac{\PP[\Psi(R)<-t]}{\PP[A>t]} \geq \EE[|A|^{\alpha}\ind_{\{A>0\}}] d_{-} + \EE[|A|^{\alpha}\ind_{\{A<0\}}] d_{+} + \EE[f_-(R)].
	\end{equation*}
\end{lem}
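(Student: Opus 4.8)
The plan is to repeat the proof of Lemma~\ref{lem:1iterationsup} almost verbatim, replacing upper bounds by lower bounds, the factor $(1-\varepsilon)^{-\alpha}$ by $(1+\varepsilon)^{-\alpha}$, and the dominated convergence theorem by Fatou's lemma. One keeps the same splitting
\begin{equation*}
	\PP[\Psi(R)>t]=I_1+I_2+I_3,\qquad I_1=\PP[\Psi(R)>t,\,|R|\le\gamma],\quad I_2=\PP[\Psi(R)>t,\,R>\gamma],\quad I_3=\PP[\Psi(R)>t,\,R<-\gamma],
\end{equation*}
and bounds each term from below. For $I_1$, independence of $R$ and $(\Psi,A,B,\Phi)$ gives $I_1=\int_{[-\gamma,\gamma]}\PP[\Psi(y)>t]\,\PP[R\in\ud y]$; by~\eqref{eq:f+} the ratio $\PP[\Psi(y)>t]/\PP[A>t]$ converges to $f_+(y)$ for $\mathcal{L}(R)$-almost every $y$, so Fatou's lemma yields $\liminf_{t\to\infty}I_1/\PP[A>t]\ge\EE[f_+(R)\ind_{\{|R|\le\gamma\}}]$.

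For $I_2$, on $\{R>\gamma\}$ one uses $\Psi(R)=AR+\Phi(R)\ge AR-B\phi(R)\ge(A-\delta B)R-Bt_0$, where $t_0=t_0(\delta)$ satisfies $\phi(x)\le\delta x+t_0$ for all $x\ge0$ (possible because $\phi$ is locally bounded and $\phi(x)=o(x)$). Since $\{(A-\delta B)R>(1+\varepsilon)t\}\cap\{Bt_0\le\varepsilon t\}\subseteq\{(A-\delta B)R-Bt_0>t\}$ and $R$ is independent of $(A,B)$,
\begin{equation*}
	I_2\ \ge\ \PP[(A-\delta B)R>(1+\varepsilon)t,\,R>\gamma]-\PP[Bt_0>\varepsilon t]\,\PP[R>\gamma].
\end{equation*}
By~\eqref{eq:psi15} and the regular variation of $\PP[A>\cdot\,]$ the subtracted term is bounded by $c(\delta,\varepsilon)\PP[R>\gamma]\PP[A>t]\,(1+o(1))$, so it will disappear once $\gamma\to\infty$. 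The first term equals $\PP\big[(A-\delta B)_+\cdot R\ind_{\{R>\gamma\}}>(1+\varepsilon)t\big]$, a product of two independent nonnegative random variables; passing to logarithms and applying the $\liminf$ part of Lemma~\ref{lem:convsa} with $F=\mathcal{L}(\log A)$ — using that $\liminf_t\PP[R\ind_{\{R>\gamma\}}>t]/\PP[A>t]=d_+$ and that $k_1(\delta):=\liminf_t\PP[(A-\delta B)_+>t]/\PP[A>t]$ is finite because $(A-\delta B)_+\le|A|$ and, by~\eqref{eq:psi15}, $\PP[|A|>t]=O(\PP[A>t])$ — one gets
\begin{equation*}
	\liminf_{t\to\infty}\frac{I_2}{\PP[A>t]}\ \ge\ (1+\varepsilon)^{-\alpha}\Big(d_+\,\EE\big[(A-\delta B)^\alpha\ind_{\{A>\delta B\}}\big]+k_1(\delta)\,\EE\big[R^\alpha\ind_{\{R>\gamma\}}\big]\Big)-c(\delta,\varepsilon)\,\PP[R>\gamma].
\end{equation*}
The term $I_3$ is handled in exactly the same fashion, using $\Psi(R)\ge(A+\delta B)R-Bt_0$ on $\{R<-\gamma\}$ and the product $\big(-(A+\delta B)\big)_+\cdot\big((-R)\ind_{\{R<-\gamma\}}\big)$, the main contribution now being $d_-\,\EE\big[(-(A+\delta B))^\alpha\ind_{\{A<-\delta B\}}\big]$.

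Adding the three $\liminf$-bounds (superadditivity of $\liminf$) and passing to the limit in the order $t\to\infty$, then $\gamma\to\infty$, then $\delta,\varepsilon\to0$ finishes the argument: as $\gamma\to\infty$ the terms $c(\delta,\varepsilon)\PP[R>\gamma]$ and $c(\delta,\varepsilon)\PP[R<-\gamma]$ vanish, as do the cross terms $k_1(\delta)\EE[R^\alpha\ind_{\{R>\gamma\}}]$ and its analogue (here one uses $\EE[|R|^\alpha]<\infty$, which follows from $\PP[|R|>t]=O(\PP[A>t])$, Proposition~\ref{prop:2nd}, together with $\EE[|A|^\alpha]<\infty$), while $\EE[f_+(R)\ind_{\{|R|\le\gamma\}}]\uparrow\EE[f_+(R)]$; then as $\delta,\varepsilon\to0$ one has $(1+\varepsilon)^{-\alpha}\to1$ and, by dominated convergence, $\EE[(A-\delta B)^\alpha\ind_{\{A>\delta B\}}]\to\EE[|A|^\alpha\ind_{\{A>0\}}]$ and $\EE[(-(A+\delta B))^\alpha\ind_{\{A<-\delta B\}}]\to\EE[|A|^\alpha\ind_{\{A<0\}}]$, which is the asserted lower bound for $\PP[\Psi(R)>t]$. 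The bound for $\PP[\Psi(R)<-t]$ follows identically, with~\eqref{eq:f-} in place of~\eqref{eq:f+} and the roles of $d_+$ and $d_-$ interchanged between the two halves. I expect the main obstacle to be organizational rather than conceptual: the $\phi$-approximation error ($Bt_0$) is only $O(\PP[A>t])$, not $o(\PP[A>t])$, which forces one to factor out $\PP[R>\pm\gamma]$ via independence and send $\gamma\to\infty$ \emph{before} $\varepsilon\to0$; and one must check that applying Lemma~\ref{lem:convsa} to products rather than sums, through the logarithm, is legitimate — the atoms at $0$ of the truncated variables become atoms at $-\infty$ that affect neither the right tails nor the relevant $m_\alpha$-moments.
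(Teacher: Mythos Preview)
Your proposal is correct and follows essentially the same approach as the paper, whose proof of this lemma is a single sentence: ``Using the same decompositions and Fatou's Lemma instead of the dominated convergence Theorem.'' You have supplied the details the paper omits --- the reversed inequality $\Psi(R)\ge(A-\delta B)R-Bt_0$ on $\{R>\gamma\}$ (resp.\ $\Psi(R)\ge(A+\delta B)R-Bt_0$ on $\{R<-\gamma\}$), the inclusion-exclusion to peel off the $Bt_0$ error with an extra factor $\PP[R>\gamma]$, the $\liminf$-half of Lemma~\ref{lem:convsa}, and the order of limits $\gamma\to\infty$ before $\delta,\varepsilon\to0$ --- all of which are exactly what the paper's proof of Lemma~\ref{lem:1iterationsup} does with signs reversed.
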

\begin{proof}[Proof of Theorem~\ref{thm:ifs}]
	In view of Propositions~\ref{prop:1st} and \ref{prop:2nd}, only \eqref{eq:main}  needs to be proved. 
	Denote 
	$$
		\mu_{\pm} = \EE[A_{\pm}^{\alpha}], \quad \xi_{\pm}=\EE[f_{\pm}(R)].
	$$ 
	The fact that $R$ satisfies~\eqref{eq:sfpe} combined with Lemmas~\ref{lem:1iterationinf} and \ref{lem:1iterationsup} gives us
	$$
		D_++D_- \leq \frac{\xi_+ + \xi_-}{1-\mu_+- \mu_-}
	$$
	and
	$$
		d_++d_- \geq \frac{\xi_+ + \xi_-}{1-\mu_+- \mu_-}.
	$$
	Since $D_+\geq d_+$ and $D_-\geq d_-$, the two inequalities above imply that $D_+ = d_+$ and $D_-=d_-$. Thus, another appeal to Lemmas~\ref{lem:1iterationinf} and \ref{lem:1iterationsup} yields
	\begin{align*}
		D_+ & = \mu_+D_+ + \mu_-D_- + \xi_+\\
		D_- & = \mu_+D_- + \mu_-D_+ + \xi_-.
	\end{align*}
	Since this system can be solved explicitly, this proves our Theorem.
\end{proof}


\section*{Appendix}
\setcounter{section}{0}
\refstepcounter{section}
\renewcommand{\thesection}{\Alph{section}}

Here, we gathered some facts related to the classes $\mathcal{S}(\alpha)$ and $\mathcal{S}_{\R}(\alpha)$ that we used in the article. Recall, that we will consider distribution $F$ with 
right-unbounded support. Write $F^{*n}$ for 
$n$th-convolution of $F$ and $\overline{F}$ for its tail, that is $\overline{F}(x) = 1-F(x)$.  
Before we prove Lemma \ref{lem:convsa} we need the following auxiliary result.
\begin{lem}\label{F}
	Suppose that $F\in \mathcal{S}_{\R}(\alpha)$. Then for any fixed $v>0$, the limit 
	\begin{equation}
		\lim _{x\to \8} \int _v^{x-v}\frac{\overline F(x-y)}{\overline F  (x)}\ dF(y)
	\end{equation}
	exist. Moreover
	\begin{equation}\label{smallint}
		\lim _{v\to \infty }\lim _{x\to \8} \int _v^{x-v}\frac{\overline F(x-y)}{\overline F (x)}\ dF(y)=0.
	\end{equation}
\end{lem}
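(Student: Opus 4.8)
The plan is to read off both assertions directly from the convolution property~\eqref{eq:conveq} by decomposing $\overline{F^{*2}}$ explicitly. Let $X_1,X_2$ be independent with common law $F$, fix $v>0$, and take $x>2v$. The three events $\{X_1\le v\}$, $\{X_2\le v\}$, $\{X_1>v,X_2>v\}$ cover the whole space, with the first two overlapping only on $\{X_1\le v,X_2\le v\}$; since $X_1+X_2\le 2v<x$ there, inclusion--exclusion gives
\begin{align*}
	\overline{F^{*2}}(x)&=\PP[X_1+X_2>x,\,X_1\le v]+\PP[X_1+X_2>x,\,X_2\le v]\\
	&\qquad+\PP[X_1+X_2>x,\,X_1>v,\,X_2>v].
\end{align*}
Conditioning on one coordinate shows each of the first two probabilities equals $\int_{(-\infty,v]}\overline{F}(x-y)\,dF(y)$, while in the third one, conditioning on the first coordinate $y>v$ produces $\overline{F}(\max\{x-y,v\})$ under the integral, and splitting the range at the point where $x-y=v$ gives $\int_{(v,\,x-v]}\overline{F}(x-y)\,dF(y)+\overline{F}(v)\,\overline{F}(x-v)$. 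Dividing by $\overline{F}(x)$ and writing $I(v,x)=\int_v^{x-v}\frac{\overline{F}(x-y)}{\overline{F}(x)}\,dF(y)$ for the middle term, I obtain the identity
\[
	\frac{\overline{F^{*2}}(x)}{\overline{F}(x)}=2\int_{(-\infty,v]}\frac{\overline{F}(x-y)}{\overline{F}(x)}\,dF(y)+I(v,x)+\overline{F}(v)\,\frac{\overline{F}(x-v)}{\overline{F}(x)}.
\]

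Next I let $x\to\infty$ with $v$ fixed. By~\eqref{eq:Lalpha}, $\overline{F}(x-y)/\overline{F}(x)\to e^{\alpha y}$ for each fixed $y$, and for $0\le y\le v$ (and $x$ large) monotonicity of $\overline{F}$ bounds this ratio by $\overline{F}(x-v)/\overline{F}(x)\le 2e^{\alpha v}$, while for $y<0$ it is bounded by $1$; dominated convergence then yields $\int_{(-\infty,v]}\overline{F}(x-y)/\overline{F}(x)\,dF(y)\to\int_{(-\infty,v]}e^{\alpha y}\,dF(y)$, which is finite because $m_\alpha(F)<\infty$ for $F\in\mathcal S_\R(\alpha)$. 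Also $\overline{F}(x-v)/\overline{F}(x)\to e^{\alpha v}$ and, by~\eqref{eq:conveq}, $\overline{F^{*2}}(x)/\overline{F}(x)\to 2m_\alpha(F)$. Since every term in the displayed identity except $I(v,x)$ converges, so does $I(v,x)$, and
\[
	\lim_{x\to\infty}I(v,x)=2m_\alpha(F)-2\int_{(-\infty,v]}e^{\alpha y}\,dF(y)-\overline{F}(v)e^{\alpha v}=2\int_{(v,\infty)}e^{\alpha y}\,dF(y)-\overline{F}(v)e^{\alpha v},
\]
which proves the first assertion, in closed form. For~\eqref{smallint} I then send $v\to\infty$ in this formula: as $m_\alpha(F)=\int_\RR e^{\alpha y}\,dF(y)<\infty$, the tail integral $\int_{(v,\infty)}e^{\alpha y}\,dF(y)$ tends to $0$, and $0\le \overline{F}(v)e^{\alpha v}=\int_{(v,\infty)}e^{\alpha v}\,dF(y)\le\int_{(v,\infty)}e^{\alpha y}\,dF(y)\to0$ as well; hence $\lim_{v\to\infty}\lim_{x\to\infty}I(v,x)=0$.

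There is no serious obstacle here: the argument is essentially bookkeeping around~\eqref{eq:conveq}. The two points that need a little care are getting the decomposition exactly right---whichever half-open convention is used, a possible atom of $F$ at $v$ or at $x-v$ only moves a term of size at most $e^{\alpha v}F(\{v\})\le\int_{[v,\infty)}e^{\alpha y}\,dF(y)$ from one piece of the identity to another, which is harmless for both the $x\to\infty$ limit and the subsequent $v\to\infty$ limit---and justifying the dominated-convergence step on the $(-\infty,v]$ piece, for which the elementary bound above suffices. The same computation also covers $\alpha=0$, where $m_0(F)=1$ and $\lim_{x\to\infty}I(v,x)=\overline{F}(v)\to0$.
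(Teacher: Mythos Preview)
Your proof is correct and follows essentially the same route as the paper's: both decompose $\overline{F^{*2}}(x)$ into the two boundary pieces $\int_{(-\infty,v]}\overline F(x-y)\,dF(y)$, the middle piece $I(v,x)$, and the corner term $\overline F(v)\overline F(x-v)$, then pass to the limit using~\eqref{eq:Lalpha}, dominated convergence, and~\eqref{eq:conveq}. Your symmetric three-event covering $\{X_1\le v\}\cup\{X_2\le v\}\cup\{X_1>v,X_2>v\}$ reaches the identity a bit more directly than the paper's four-event partition, and your added remarks on atoms and on the case $\alpha=0$ are sound but not needed for the statement as used later.
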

\begin{proof}
	We proceed as in the proof of Lemma 2.7 in \cite{embrechts1982convolution}. Let $x>2v$ and let $X,Y$ be two independent random variables with law $F$. Then
	\begin{align*}
		\P (X+Y>x)=&\P (X+Y>x, X\leq v)+\P (X+Y>x, v<X\leq x-v)\\
		&+\P (X>x-v, Y>v )+\P (X+Y>x, Y\leq v) \\
		 =&2\P (X+Y>x, X\leq v)+\P (X+Y>x, v<X\leq x-v)\\
		&+\P (X>x-v, Y>v ).
	\end{align*}
	Hence
	\begin{align*}
		\frac{\overline{F^{*2}}(x)}{\overline F (x)}= &2\int _{-\8 }^v\frac{\overline {F}(x-y)}{\overline F (x)}\ dF(y)\\
		&+\int _v^{x-v}\frac{\overline {F}(x-y)}{\overline F (x)}\ dF(y)+\frac{\overline  {F}(x-v)}{\overline F (x)}\overline F (v).
	\end{align*}
	The third term can be managed quite easily as $x \to \infty$, since by the merit of~\eqref{eq:Lalpha},
	$$
		\lim _{x\to \8}\frac{\overline F(x-v)}{\overline F (x)}\overline F (v)= e^{\a v}\overline F (v).
	$$
	For the same reason, by an appeal to the Lebesgue dominated convergence Theorem, we can identify the limit of the first term as
	$$
		\lim _{x\to \8}\int _{-\8 }^v\frac{\overline {F}(x-y)}{\overline F (x)}\ dF(y)=\int _{-\8}^v e^{\a y}\ dF(y).
	$$
	In view of~\eqref{eq:conveq} we are allowed to conclude that
	$$
		2m_{\alpha }(F)=2\int _{-\8}^v e^{\a y}\ dF(y)\\+ \lim _{x\to \8}\int _v^{x-v}\frac{\overline{F}(x-y)}{\overline F (x)}\ dF(y)+e^{\a v}\overline F (v).
	$$
	This proves our first claim. The second one follows by letting $v\to \8$. 
\end{proof}

\begin{lem}\label{lem:G}
	Suppose that $F\in \mathcal{S}_{\R}(\alpha)$ and that $\overline G _i (y)\leq k_i\overline F (y)$ for $y\geq v >0 $ and $i=1,2$. Then for  $x>2v> 0$ one has
	\begin{equation}\label{2G}
		\int _v^{x-v}\frac{\overline  G_1(x-y)}{\overline F (x)}\ dG_2(y)\leq k_1k_2\Big (\frac{\overline {F}(x-v)}{\overline F (x)}\overline F (v)
		+\int _v^{x-v}\frac{\overline {F}(x-y)}{\overline F (x)}\ dF(y)\Big )
	\end{equation}
\end{lem}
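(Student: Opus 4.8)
The plan is to decouple the two hypotheses and reduce everything to the analogous quantity for $F$ alone. Since $y\in(v,x-v]$ forces $x-y\geq v$, the hypothesis $\overline{G}_1(u)\leq k_1\overline{F}(u)$ (valid for $u\geq v$) applies with $u=x-y$, so
$$
	\int_v^{x-v}\frac{\overline{G}_1(x-y)}{\overline{F}(x)}\,dG_2(y)\leq k_1\int_v^{x-v}\frac{\overline{F}(x-y)}{\overline{F}(x)}\,dG_2(y),
$$
and it remains to show
$$
	\int_v^{x-v}\overline{F}(x-y)\,dG_2(y)\leq k_2\Big(\overline{F}(x-v)\,\overline{F}(v)+\int_v^{x-v}\overline{F}(x-y)\,dF(y)\Big);
$$
multiplying the two bounds and dividing by $\overline{F}(x)$ then yields \eqref{2G}.

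For the second estimate I would write $\overline{F}(x-y)=\int_{(x-y,\infty)}dF(s)$ and invoke Tonelli's theorem to exchange the order of integration, so that $\int_{(v,x-v]}\overline{F}(x-y)\,dG_2(y)$ becomes the integral of $dF(s)\,dG_2(y)$ over the region $\{\,v<y\leq x-v,\ s>x-y\,\}$, which I then integrate first in $y$. Splitting the $s$-line into $(-\infty,v]$, $(v,x-v]$ and $(x-v,\infty)$: for $s\leq v$ the constraint $s>x-y$ forces $y>x-s\geq x-v$, incompatible with $y\leq x-v$, so this range contributes nothing; for $s>x-v$ the constraint is vacuous on $(v,x-v]$, so the inner mass equals $G_2((v,x-v])=\overline{G}_2(v)-\overline{G}_2(x-v)\leq\overline{G}_2(v)\leq k_2\overline{F}(v)$ while the outer $F$-mass of $(x-v,\infty)$ is $\overline{F}(x-v)$, giving a contribution at most $k_2\overline{F}(v)\overline{F}(x-v)$; and for $v<s\leq x-v$ the inner $y$-range is $(x-s,x-v]$ with $G_2$-mass $\overline{G}_2(x-s)-\overline{G}_2(x-v)\leq\overline{G}_2(x-s)$, and because $x-s\geq v$ the hypothesis gives $\overline{G}_2(x-s)\leq k_2\overline{F}(x-s)$, so this range contributes at most $k_2\int_{(v,x-v]}\overline{F}(x-s)\,dF(s)$. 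Adding the three contributions gives exactly the bound above.

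The main point to watch is the endpoint bookkeeping together with the fact that the hypothesis on $G_1,G_2$ is only a tail bound above the level $v$; the three-way split is arranged precisely so that $\overline{G}_i$ is ever compared with $\overline{F}$ only at arguments $\geq v$, and the restriction of the outer integral to $(v,x-v]$ is exactly what makes $x-y\geq v$ available in the first step. An equivalent route avoiding Tonelli is Lebesgue--Stieltjes integration by parts on $\int_{(v,x-v]}\overline{F}(x-y)\,d\overline{G}_2(y)$: there the boundary terms are $\overline{F}(x-y)\overline{F}(y)$ evaluated at $y=v$ and $y=x-v$, which are equal and cancel, leaving an integral of $\overline{G}_2(y-)$ against the nonnegative measure $d_y\overline{F}(x-y)$; bounding $\overline{G}_2(y-)\leq k_2\overline{F}(y-)$ and integrating by parts once more returns $k_2\int_{(v,x-v]}\overline{F}(x-y)\,dF(y)$, while the surviving boundary term $\overline{F}(x-v)\overline{G}_2(v)\leq k_2\overline{F}(x-v)\overline{F}(v)$ produces the first summand.
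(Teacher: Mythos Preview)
Your main (Tonelli) argument is correct and is essentially the same two-step reduction the paper uses: first bound $\overline{G}_1(x-y)\leq k_1\overline{F}(x-y)$, then turn the remaining $G_2$-integral around; the paper does the second step by a single Lebesgue--Stieltjes integration by parts rather than Tonelli, but the content is identical. One small correction to your aside on the IBP route: the boundary terms from integrating $\overline{F}(x-y)\,d\overline{G}_2(y)$ are $\overline{F}(x-y)\overline{G}_2(y)$, not $\overline{F}(x-y)\overline{F}(y)$, and they do \emph{not} cancel --- the paper simply discards the negative one at $y=x-v$ and bounds the surviving term $\overline{F}(x-v)\overline{G}_2(v)$ by $k_2\overline{F}(x-v)\overline{F}(v)$, which is exactly your first summand.
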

\begin{proof}
	We have
	$$
		\int _v^{x-v}\frac{\bar G_1(x-y)}{\bF (x)}\ dG_2(y)\leq k_1\int _v^{x-v}\frac{\bar F(x-y)}{\bF (x)}\ dG_2(y)
	$$
	which can be bounded further by integrating by parts
	\begin{align*}
		\int _v^{x-v}\frac{\overline F(x-y)}{\overline F (x)}\ dG_2(y)=&-\int _v^{x-v}\frac{\overline F(x-y)}{\overline F (x)}\ d\overline G _2(y) =\frac{\overline F(x-v)}{\overline F (x)}\overline G_2(v)\\
			&-\frac{\overline F(v)}{\overline F (x)}\overline G _2(x-v)+\int _{x-v}^v\frac{\overline G_2(x-y)}{\overline F (x)}\ d\overline F(y)\\
				\leq & k_2\Big (\frac{\overline {F}(x-v)}{\overline F (x)}\overline F (v)+\int _{x-v}^v\frac{\overline  F(x-y)}{\overline F (x)}\ d\overline F(y)\Big )\\
			=&k_2\Big (\frac{\overline {F}(x-v)}{\overline F (x)}\overline F (v)+\int _v^{x-v}\frac{\overline F(x-y)}{\overline F (x)}\ dF(y)\Big )
	\end{align*}
	which competes the proof.
\end{proof}

\begin{proof}[Proof of Lemma \ref{lem:convsa}]
	To prove~\eqref{convol} suppose $x>2v$. As in the proof of Lemma \ref{F} we write
	\begin{align*}
		\frac{\overline{G _1 *G_2}(x)}{\overline F (x)}=&\int _{-\8 }^v\frac{\overline G _1(x-y)}{\overline F (x)}\ dG_2(y)\\
									&+\int _{-\8 }^v\frac{\overline G _2(x-y)}{\overline F (x)}\ dG_1(y)\\
									&+\int _v^{x-v}\frac{\overline  G_1(x-y)}{\overline F (x)}\ dG_2(y) +\frac{\overline {G_2}(x-v)\overline G_1 (v)}{\overline F (x)}.
	\end{align*}
	The first term, by the Lebesgue dominated convergence Theorem, tends to
	\begin{align*}
		\lim _{x\to \8}\int _{-\8 }^v\frac{\overline G _1(x-y)}{\overline F (x)}\ dG_2(y)= & 
			\lim _{x\to \8} \int _{-\8 }^v\frac{\overline G _1(x-y)}{\overline F (x-y)}\frac{\overline F (x-y)}{\overline F (x)}\ dG_2(y)\\
		= &  k_1 \int _{-\8 }^ve^{\a y}\ dG_2(y)
	\end{align*}
	Note that the second term can be treated in exactly the same fashion. The third one,  by Lemmas \ref{F} and \ref{lem:G} in negligible, i.e.
	$$
		\lim _{v\to \8}\lim _{x\to \8}\int _v^{x-v}\frac{\overline G_1(x-y)}{\overline F (x)}\ dG_2(y)=0.
	$$
	Finally, for the last term one has 
 	$$
		\lim _{x\to \8}\frac{\overline {G_2}(x-v)\overline G_1 (v)}{\overline F (x)}= k_2e^{\a v}\overline G_1 (v)
	$$
	Letting $v\to \8 $ we obtain \eqref{convol}. The proof of the fact, that $G_i\in S_{\R}(\a)$ whenever $k_i>0$ is exactly the same as that of Lemma 2.7 in~\cite{embrechts1982convolution}. 
	The fact that the distributions there are supported on $[0,\8 )$ doesn't play any role. In order to argue in favour of~\ref{convol2}, fix $\eps >0$ and take $v>0$ big enough such that 
	$\overline{G}_i(t) \leq (k_i +\varepsilon) \overline{F}(t)$. Let $x_0>2v$ be such that for $x>x_0$
	\begin{equation*}
		\frac{\bar {F}(x-v)}{\bF (x)}\bF (v)+\int _v^{x-v}\frac{\bar {F}(x-y)}{\bF (x)}\ dF(y)<\eps .
	\end{equation*}
	Then in view of \eqref{2G} 
	\begin{align*}
		\frac{\overline{G _1 *G_2}(x)}{\overline F (x)}&=\int _{-\8 }^v\frac{\overline G _1(x-y)}{\overline F (x)}\ dG_2(y)\\
			&+\int _{-\8 }^v\frac{\overline G _2(x-y)}{\bF (x)}\ dG_1(y)+k_1k_2\eps . 
	\end{align*}
	Keeping $v$ fixed and taking $x_0$ possibly larger we have
	\begin{align*}
		\frac{\overline{G _1 *G_2}(x)}{\overline F (x)}&\leq (k_1+\varepsilon)m_{\a}(G_2)+(k_2+\varepsilon)m_{\a }(G_1)+k_1k_2\eps\\
&= k_1m_{\a}(G_2)+k_2m_{\a }(G_1)+(k_1k_2+m_{\a }(G_1)+m_{\a }(G_2))\eps 
	\end{align*}
	which shows \eqref{convol2} and \eqref{convol1}. \eqref{convol3} and \eqref{convol4} are obtained in the same way.
\end{proof}

\bibliographystyle{plain}
\bibliography{edamek_pdysz_IRFandRVT_bib}

\end{document}